\documentclass[12pt, a4]{amsart}
\usepackage{amscd, amsmath, amssymb}
\usepackage{fullpage}
\usepackage{mathrsfs,epsfig}
\usepackage{enumitem}

\theoremstyle{plain}
\newtheorem{thm}{Theorem}[section]

\theoremstyle{definition}
\newtheorem{defn}[thm]{Definition}
\newtheorem{ex}[thm]{Example}
\newtheorem{rem}[thm]{Remark}

\numberwithin{equation}{section}

\newcommand{\R}{\mathbb{R}}

 \usepackage{color}

\begin{document}

\title[An affine Birkhoff--Kellogg type result]{An affine Birkhoff--Kellogg type result in product spaces and its application to differential systems} 

\date{}

\author[A. Calamai]{Alessandro Calamai}
\address{Alessandro Calamai, 
Dipartimento di Ingegneria Civile, Edile e Architettura,
Universit\`{a} Politecnica delle Marche
Via Brecce Bianche
I-60131 Ancona, Italy}%
\email{a.calamai@univpm.it}%

\author[G. Infante]{Gennaro Infante}
\address{Gennaro Infante, Dipartimento di Matematica e Informatica, Universit\`{a} della
Calabria, 87036 Arcavacata di Rende, Cosenza, Italy}%
\email{gennaro.infante@unical.it}%

\author[G. A. Veltri]{Giuseppe Antonio Veltri}
\address{Giuseppe Antonio Veltri, 
Dipartimento di Scienze Matematiche e Informatiche, Scienze Fisiche e Scienze della Terra, Universit\`{a} di Messina
98166 Messina, Italy}%
\email{giuseppe.veltri@studenti.unime.it}%

\begin{abstract} 
We prove a version of the Birkhoff-Kellogg theorem in product spaces, under affine transformations. This is a fairly natural framework that arises when dealing with parameter-dependent systems of functional-differential equations. We provide criteria on the existence of parameter-dependent solutions that have all the components nontrivial. The theoretical results are illustrated in the case of systems of second order functional-differential equations, where the functional part can cover the interesting cases of time and state-dependent deviated arguments. We provide a concrete example, for which we furnish numerically approximated solutions, coherent with our theoretical framework, and in which we compute or estimate all the constants that are required by our abstract results.
\end{abstract}

\subjclass[2020]{47H10, 34K10, 34B10, 45G15}

\keywords{Birkhoff–Kellogg-type result, cone, nontrivial solution, operator system, functional differential equation, deviated argument}

\maketitle

\section{Introduction}
The classical Birkhoff-Kellogg theorem~\cite{B-K-1922} and its variants, play an interesting role when dealing with the existence of eigenvalues and eigenfunctions in infinite-dimensional normed linear spaces. A remarkable feature of these results is their direct applicability to the solvability of parameter-dependent problems in the context of differential equations, see for example the classical books~\cite{guolak, Krasno} and the recent papers~\cite{acgi2, gi-BK}.
In particular, in the manuscript~\cite{acgi2}, the authors provided a version of the Birkhoff-Kellogg theorem in \emph{affine cones}. One motivation for the affine setting relies on the fact that it arises quite naturally when dealing with differential equations in presence of delays. On the other hand, the solvability of vectorial equations has seen renovated interest, and new Birkhoff-Kellogg type results in product spaces have been recently developed in the paper~\cite{acgijrl}. One advantage of the application  of the vectorial results in~\cite{acgijrl}, with respect to a direct application of a Birkhoff-Kellogg type theorem for operator equations, is that it yields better localization results, see Remark~2.8 of~\cite{acgijrl}. Here, essentially, we propose an affine reformulation of the results in~\cite{acgijrl}. More specifically, we investigate parametric systems of type
	\begin{equation} \label{sys_intro}
		\left\{\begin{array}{l}
        x_1=y_1+\lambda_1\, F_1(x_1,x_2), \\ x_2=y_2+\lambda_2\,F_2(x_1,x_2), \end{array} \right.
	\end{equation}
where $y_1$ and $y_2$ are given, and the compact operator $F=(F_1,F_2)$ is defined on the Cartesian product of two sets $y_1+C_1$ and $y_2+C_2$, which can be as follows:
\begin{enumerate}
	\item $C_1$ and $C_2$ are cones; 
	\item $C_1$ is a cone and $C_2$ is a disk in an infinite dimensional normed space; 
	\item both $C_1$ and $C_2$ are disks in infinite dimensional normed spaces. 
\end{enumerate}
Under natural conditions, we
show the existence of solution pairs $(x_1,x_2)$ of the fixed point system \eqref{sys_intro}, corresponding to real parameters $\lambda_1$, $\lambda_2$. Our approach allows to prescribe the norm of each component $x_1$ and $x_2$ of the solution pair, which is therefore  nontrivial in a strong sense.

We wish to point out that a similar component-wise approach has been pursued by many authors in 
the context of fixed point theory; see for example~\cite{Avramescu, Benedetti, imap, InMaRo, Perov, PrecupFPT}.
On the other hand, the theory we develop here is more related to nonlinear spectral theory (see e.g.~\cite{ADV}), even if, formally, our results do not give the existence of ``eigenvalues'' and ``eigenfunctions'' since here we
work in an affine context,
in the spirit of~\cite{acgi2}: see also the contributions by Djebali and Mebarki~\cite{djeb2014}.
In a sense here we combine the component-wise approach with the affine setting, obtaining a quite general abstract existence result which therefore finds applications for a wide class of functional differential systems. 

Our approach is motivated by the applications.
To illustrate this in a simple setting, we take into consideration the model of a bended rod presented by Figueroa and Pouso~\cite{rub-rod-lms} and 
consider two wires of length 3, bent in an S-shaped way, placed in proximity of each other, and heated.
To fix the ideas, we set the spatial variable $t$ in the interval $[-1,2]$.
The two wires in the intervals $[-1,0] \cup [1,2]$ are kept at a certain temperature, say $\psi_i(t)$ for $t \in [-1,0]$ and $\varphi_i(t)$ for $t \in [1,2]$, for $i=1,2$. 
Due to the bending and the proximity of the two wires, in the interval $[0,1]$ the temperature at a point $t$ of one of the wires is influenced both by the values of the temperatures at the points $-t$ and $1+t$ and by the value of the temperature of the other wire at the same point $t$.
This leads to the following differential system ($i=1,2$):
\begin{equation} \label{sys_diff_intro}
\left\{\begin{array}{rll}
-u_{i}''(t) &= f_{i}(t, u_{i}(t), h_i(u_i(-t),u_i(t+1),u_j(t))),\, (i\neq j), \quad & t \in (0, 1), \\
u_{i}(t) &= \psi_{i}(t), \quad & t \in [-1, 0], \\
u_{i}(t) &= \varphi_{i}(t), \quad & t \in [1,2],
\end{array} \right.
\end{equation}
where $h_i$ are given continuous functions that reflect the interactions in the system.
Here we study the solvability of a generalization of the system~\eqref{sys_diff_intro}
set in a given interval $[-\sigma_i,\tau_i]$, $(i=1,2)$, namely
\begin{equation} \label{sys_diff_BIS}
\left\{\begin{array}{rll}
-u_{i}''(t) &= \lambda_{i}f_{i}(t, u_{1}(t), u_{2}(t), H_{i}(u_1,u_2)(t)), \quad & t \in (0, 1), \\
u_{i}(t) &= \psi_{i}(t), \quad & t \in [-\sigma_i, 0], \\
u_{i}(t) &= \varphi_{i}(t), \quad & t \in [1, \tau_i],
\end{array} \right.
\end{equation}
where $\lambda_{i} \in \R$, $H_{i}$ is a suitable operator, $f_i, \psi_{i}, \varphi_{i}$ are continuous functions, with $f_i$ possessing a suitable growth. We stress that the operator $H_{i}$ that occurs in~\eqref{sys_diff_BIS} is well tailored in order to take into account several effects, for example time and space-dependent delays, advanced or deviated arguments.
Functional-differential equations are a well-investigated subject, see, e.g.~\cite{halelunel};
for functional differential equations with state-dependent delays, see, e.g., the survey~\cite{hartung} and the recent manuscript~\cite{hemedr}.
In such a theory, equations with deviated arguments play an interesting role: for the second order case we refer the reader to the classical papers by Grimmm and Schmitt \cite{GrimmSchmitt} and Ntouyas and Tsamatos~\cite{NtouyasTsamatos1,NtouyasTsamatos2}.
We prove the existence of solutions for the system~\eqref{sys_diff_BIS} that have component-wise (non-zero) fixed norms and corresponding associated vectorial parameters that can be estimated. We show, in an example, that all the constants that occur in our theory can be either computed or estimated; furthermore we provide numerically approximated solutions and parameters, which are coherent with the theoretical findings.

\section{Birkhoff-Kellogg type results on translates}

In this Section we provide a vectorial and affine version of the Birkhoff-Kellogg theorem.
From now on, we will denote by $(X,\| \ \|)$ a normed linear space and by $K$ a cone in $X$;
that is, a closed set with $K+K\subset K$, $\mu K\subset K$ for all $\mu\ge 0$ and $K\cap(-K)=\{0\}$.

Moreover, by $B_{r}$ we will mean the open ball in $X$ centered at the origin and with radius $r>0$, while
by $\overline{B}_{r}$, resp.\ $\partial\overline{B}_{r}$, we denote the closed disk and its boundary, respectively.
We also introduce the following notation: $K_{r}= B_{r}\cap K$, so that $\overline{K}_{r}= \overline{B}_{r} \cap K$, resp.\ 
$\partial\overline{K}_{r}= \partial\overline{B}_{r} \cap K$, denote the closure and boundary of $K_{r}$ relative to $K$.
Notice that $\partial  \overline{K}_{r}$ is a retract of $\overline{K}_{r}$. 
With a slight abuse of notation (note that the whole space $X$ is not a cone) we still denote $X_{r}= B_{r}$, $\overline{X}_{r}= \overline{B}_{r}$,  
$\partial\overline{X}_{r}= \partial\overline{B}_{r}$. Observe that, if $X$ is infinite dimensional, $\partial  \overline{X}_{r}$ is a retract of $\overline{X}_{r}$.

Let $(X_1,\|\ \|_1)$ and $(X_2,\|\ \|_2)$ be normed linear spaces and $C_1\subset X_1$, $C_2\subset X_2$ such that for each $i\in\{1,2\}$ either
\begin{enumerate}
	\item[(a)] $C_i=K_i$ is a cone; or
	\item[(b)] $C_i = X_i$ is an infinite dimensional normed space. 
\end{enumerate}
We will consider the product space $X_1 \times X_2$ endowed with the maximum norm.

The following result (see \cite[Theorem 2.3]{acgijrl}) is a version of the Birkhoff--Kellogg Theorem in product spaces.

\begin{thm}\label{th_sys}
	Let $r_1,r_2$ be positive constants and suppose that $$T=(T_1,T_2):\overline{C}_{1,r_1}\times \overline{C}_{2,r_2}\longrightarrow C_1\times C_2$$ is a compact map satisfying that 
		\begin{equation*}
\inf_{\left\|x_1\right\|_1=r_1,\ \left\|x_2\right\|_2= r_2}\left\|T_1(x_1, x_2)\right\|_1>0 \quad \text{and} \quad \inf_{\left\|x_1\right\|_1= r_1,\ \left\|x_2\right\|_2= r_2}\left\|T_2(x_1, x_2)\right\|_2>0.
	\end{equation*}
	Then there exist $\lambda_1,\lambda_2>0$ and $(x^*_1, x^*_2)\in \overline{C}_{1,r_1}\times \overline{C}_{2,r_2}$ with $\left\|x^*_1\right\|_1=r_1$ and $\left\|x^*_2\right\|_2=r_2$ such that
	\begin{equation*}
		\left\{\begin{array}{l} x^*_1=\lambda_1\, T_1(x^*_1, x^*_2), \\ x^*_2=\lambda_2\,T_2(x^*_1, x^*_2). \end{array} \right.
	\end{equation*}
\end{thm}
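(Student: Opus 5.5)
We reduce to Schauder's fixed point theorem on the product of the two spheres, in the style of the classical Birkhoff--Kellogg argument. Write $S_i=\partial\overline{C}_{i,r_i}$ for $i=1,2$. By the standing observations, $S_i$ is a retract of $\overline{C}_{i,r_i}$ in both cases (a) and (b): for a cone this is elementary, and for an infinite dimensional space it is the classical fact that the sphere is a retract of the closed ball. Fix retractions $\rho_i:\overline{C}_{i,r_i}\to S_i$. Then $S_1\times S_2$ is a retract of the closed convex set $\overline{C}_{1,r_1}\times\overline{C}_{2,r_2}$.

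The first step is a positive lower bound for $\|T_i\|$ on $S_1\times S_2$. Set
\begin{equation*}
\delta_i=\inf_{\|x_1\|_1=r_1,\ \|x_2\|_2=r_2}\|T_i(x_1,x_2)\|_i>0,\qquad i=1,2.
\end{equation*}
On $S_1\times S_2$ we then have $\|T_i(x_1,x_2)\|_i\ge\delta_i$, so the normalized map
\begin{equation*}
N(x_1,x_2)=\Bigl(\frac{r_1\,T_1(x_1,x_2)}{\|T_1(x_1,x_2)\|_1},\ \frac{r_2\,T_2(x_1,x_2)}{\|T_2(x_1,x_2)\|_2}\Bigr)
\end{equation*}
is well defined and continuous on $S_1\times S_2$. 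Since each $C_i$ is invariant under positive scalings, $N$ takes values in $S_1\times S_2$.

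Next, define $G=N\circ(\rho_1\times\rho_2):\overline{C}_{1,r_1}\times\overline{C}_{2,r_2}\to S_1\times S_2\subset\overline{C}_{1,r_1}\times\overline{C}_{2,r_2}$. The domain is a nonempty closed bounded convex subset of the normed space $X_1\times X_2$. We show that $G$ is compact. The set $T(\overline{C}_{1,r_1}\times\overline{C}_{2,r_2})$ is relatively compact. Moreover, the map $(z_1,z_2)\mapsto(r_1z_1/\|z_1\|_1,\ r_2z_2/\|z_2\|_2)$ is continuous on the set $\{\|z_1\|_1\ge\delta_1,\ \|z_2\|_2\ge\delta_2\}$, which is closed. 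The relevant image $T(S_1\times S_2)$ has closure contained in that set, so its image under this continuous map is relatively compact.

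The main obstacle is justifying Schauder in a possibly incomplete normed space. We handle it by using the version of Schauder's theorem for compact continuous self-maps of closed convex subsets of normed spaces. Alternatively, we apply Schauder to the closed convex hull of the compact set $\overline{G(\cdot)}$, intersected with the domain; that hull is compact by Mazur's theorem in the completion, and it is contained in the closed convex domain.

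Schauder's theorem then gives a fixed point $(x_1^*,x_2^*)=G(x_1^*,x_2^*)$. This point lies in $S_1\times S_2$, where $\rho_i$ is the identity. Hence
\begin{equation*}
x_i^*=\frac{r_i}{\|T_i(x_1^*,x_2^*)\|_i}\,T_i(x_1^*,x_2^*),
\end{equation*}
so $\|x_i^*\|_i=r_i$ and $x_i^*=\lambda_iT_i(x_1^*,x_2^*)$ with $\lambda_i=r_i/\|T_i(x^*_1,x^*_2)\|_i\in(0,r_i/\delta_i]$, which is the required conclusion.
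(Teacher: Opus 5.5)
Your proof is correct. Note that the paper does not prove this statement; it quotes it from \cite[Theorem 2.3]{acgijrl}. The only hints it gives are the remarks that $\partial\overline{K}_r$ is a retract of $\overline{K}_r$, and that $\partial\overline{X}_r$ is a retract of $\overline{X}_r$ for infinite dimensional $X$. Your argument (retract onto $S_1\times S_2$, normalize $T_1$ and $T_2$ separately, apply Schauder) is exactly the Birkhoff--Kellogg scheme those remarks prepare for. The componentwise normalization is what produces two independent parameters $\lambda_1,\lambda_2$.

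There are two small points.

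First, discard your ``alternative'' justification of Schauder. If $X_1\times X_2$ is incomplete, the closed convex hull (taken in the completion) of the closure of the range of $G$ is compact by Mazur, but it need not lie in $X_1\times X_2$. Its intersection with your domain need not be compact either. For an example, take $c_{00}$ with the $\ell^2$ norm and the compact set $\{e_n/n\}\cup\{0\}$: the partial sums of $\sum_n 2^{-n}e_n/n$ lie in the hull inside $c_{00}$ but converge to a point outside $c_{00}$. The first version you quote is all you need, and it holds without completeness, since its proof via Schauder projections and Brouwer never uses completeness. That version is Schauder's theorem for a continuous self-map with relatively compact range of a closed convex subset of a normed space.

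Second, the ``elementary'' cone retraction needs slightly more than you say. A concrete choice is $x\mapsto r\,(x+(r-\|x\|)e)/\|x+(r-\|x\|)e\|$ with $e\in K$, $\|e\|=1$. This uses $K\cap(-K)=\{0\}$ to keep the denominator nonzero, and it requires $K\neq\{0\}$. The latter is implicit in the statement anyway: for $K_i=\{0\}$ the infimum hypothesis is vacuous and the conclusion fails.
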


Given $y_i \in X_{i}$ and $r_i>0$, $i=1,2$, we introduce the notation
$$
C_{i,r_i}^{y_i} := \{x_i = y_i + w_i : w_i \in C_{i,r_i} \}, i=1,2.
$$
We now present a variation of Theorem~\ref{th_sys} in the context of translates in product spaces.
\begin{thm}\label{th_affine_sys}
	Let $y_1 \in X_{1}, y_2 \in X_{2}$ be given, let $r_1,r_2$ be positive constants and suppose that $$F=(F_1,F_2):\overline{C_{1,r_1}^{y_1}}\times \overline{C_{2,r_2}^{y_2}}\longrightarrow C_1\times C_2$$ is a compact map satisfying that
        \begin{equation*}
    \inf_{x_1 \in \partial C_{1,r_1}^{y_1}, x_2 \in \partial{C_{2,r_2}^{y_2}}} \|F_1(x_1, x_2)\|_1>0 \quad \text{and} \quad
    \inf_{x_1 \in \partial{C_{1,r_1}^{y_1}}, x_2 \in \partial C_{2,r_2}^{y_2}} \|F_2(x_1, x_2)\|_2 > 0.
	\end{equation*}
	Then there exist $\lambda_1,\lambda_2>0$ and $(x^*_1, x^*_2)=(y_1+w^*_1,y_2+w^*_2)\in \overline{C_{1,r_1}^{y_1}}\times \overline{C_{2,r_2}^{y_2}}$ with $\left\|w^*_1\right\|_1=r_1$ and $\left\|w^*_2\right\|_2=r_2$ such that
	\begin{equation}\label{eq_sol_sys}
		\left\{\begin{array}{l} x^*_1= y_1 +\lambda_1\, F_1(x^*_1, x^*_2), \\ x^*_2=y_2 +\lambda_2\,F_2(x^*_1, x^*_2). \end{array} \right.
	\end{equation}
\end{thm}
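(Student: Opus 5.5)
The plan is to reduce Theorem~\ref{th_affine_sys} to Theorem~\ref{th_sys} by translating back to the origin. Define
$$T=(T_1,T_2):\overline{C}_{1,r_1}\times \overline{C}_{2,r_2}\longrightarrow C_1\times C_2$$
by
$$T_i(w_1,w_2):=F_i(y_1+w_1,\,y_2+w_2),\qquad i=1,2.$$
By definition of $C_{i,r_i}^{y_i}$, the map $(w_1,w_2)\mapsto (y_1+w_1,y_2+w_2)$ is a bijection from $\overline{C}_{1,r_1}\times \overline{C}_{2,r_2}$ onto $\overline{C_{1,r_1}^{y_1}}\times \overline{C_{2,r_2}^{y_2}}$. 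The one point I will check is that translation commutes with taking closures. Translation is a homeomorphism (an isometry) of $X_i$, so $\overline{y_i+C_{i,r_i}}=y_i+\overline{C_{i,r_i}}$. Likewise, the boundary $\partial C_{i,r_i}^{y_i}$, understood relative to the translated set, is exactly $\{y_i+w_i:\ w_i\in C_i,\ \|w_i\|_i=r_i\}$. This identification of the boundary is really the only step needing care. I read $\partial C^{y_i}_{i,r_i}$ in the statement as $y_i+\partial\overline{C}_{i,r_i}$, consistent with the notation introduced for $K_r$ and $X_r$.

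Next I check the hypotheses of Theorem~\ref{th_sys} for $T$. Continuity of $T$ follows from continuity of $F$ composed with a translation. Compactness holds because $T$ maps bounded sets into sets of the form $F(\text{bounded})$, since translates of bounded sets are bounded; these are relatively compact. $T$ takes values in $C_1\times C_2$ because $F$ does. For the infimum condition, the boundary identification above gives
$$\inf_{\|w_1\|_1=r_1,\ \|w_2\|_2=r_2}\|T_i(w_1,w_2)\|_i=\inf_{x_1\in\partial C_{1,r_1}^{y_1},\ x_2\in \partial C_{2,r_2}^{y_2}}\|F_i(x_1,x_2)\|_i>0 .$$

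Theorem~\ref{th_sys} then yields $\lambda_1,\lambda_2>0$ and $(w_1^*,w_2^*)$ with $\|w_i^*\|_i=r_i$ and $w_i^*=\lambda_i T_i(w_1^*,w_2^*)$. Setting $x_i^*:=y_i+w_i^*$ gives
$$x_i^*=y_i+\lambda_iF_i(x_1^*,x_2^*),$$
which is \eqref{eq_sol_sys}, with $(x_1^*,x_2^*)\in\overline{C_{1,r_1}^{y_1}}\times\overline{C_{2,r_2}^{y_2}}$ as required. No genuine obstacle is expected beyond this bookkeeping of closures and boundaries under translation.
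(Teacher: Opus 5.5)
Your proposal is correct and matches the paper's proof, which likewise substitutes $w_i=x_i-y_i$, defines $G_i(w_1,w_2):=F_i(y_1+w_1,y_2+w_2)$, and applies Theorem~\ref{th_sys}. Your explicit check that translation maps closures and relative boundaries onto closures and relative boundaries, so that the infimum hypotheses transfer, is a verification the paper leaves implicit.
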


\begin{proof}
    Consider the following system:
\begin{equation*}
    \begin{cases}
        x_1 = y_1 + \lambda_1 F_1(x_1, x_2), \\
        x_2 = y_2 + \lambda_2 F_2(x_1, x_2).
    \end{cases}
\end{equation*}
By making the substitution $w_i = x_i - y_i$ (and thus $x_i = y_i + w_i$), we obtain:
\begin{equation*}
    \begin{cases}
        w_1 = \lambda_1 F_1(y_1 + w_1, y_2 + w_2), \\
        w_2 = \lambda_2 F_2(y_1 + w_1, y_2 + w_2).
    \end{cases}
\end{equation*}
Defining the operators $G_i(w_1, w_2) := F_i(y_1 + w_1, y_2 + w_2)$, the system takes the form:
\begin{equation*}
    \begin{cases}
        w_1 = \lambda_1 G_1(w_1, w_2), \\
        w_2 = \lambda_2 G_2(w_1, w_2),
    \end{cases}
\end{equation*}
and the result now follows from Theorem~\ref{th_sys}.
\end{proof}

    We stress that when the operator $F$ is defined in the product of an affine cone times an infinite dimensional normed space, an additional solution can be obtained.
    In fact the following result holds, see
    Theorem 2.5 in \cite{acgijrl}.

\begin{thm} \label{th_sys_kb}
	Let $K_1$ be a cone in the normed linear space $X_1$, and $X_2$ be an infinite dimensional normed space. 
	Let $y_1 \in X_{1}, y_2 \in X_{2}$ be given, let $r_1,r_2$ be positive constants and suppose that
$$F=(F_1,F_2):\overline{K_{1,r_1}^{y_1}}\times \overline{B_{2,r_2}^{y_2}}\longrightarrow K_1\times X_2$$ is a compact map satisfying that
        \begin{equation*}
    \inf_{x_1 \in \partial K_{1,r_1}^{y_1}, x_2 \in \partial B_{2,r_2}^{y_2}} \|F_1(x_1, x_2)\|_1>0 \quad \text{and} \quad
    \inf_{x_1 \in \partial K_{1,r_1}^{y_1}, x_2 \in \partial B_{2,r_2}^{y_2}} \|F_2(x_1, x_2)\|_2 > 0.
	\end{equation*}
Then there exist $\lambda_{1,1},\lambda_{2,1},\lambda_{1,2} >0$, $\lambda_{2,2}<0$ and $(x^*_{1,j}, x^*_{2,j})=(y_1+w^*_{1,j},y_2+w^*_{2,j})\in \overline{K_{1,r_1}^{y_1}}\times \overline{B_{2,r_2}^{y_2}}$, $j=1,2$,  with $\left\|w^*_{1,j}\right\|_1=r_1$ and $\left\|w^*_{2,j}\right\|_2=r_2$ such that
	\begin{equation*}
		\left\{\begin{array}{l} x^*_{1,j}= y_1 +\lambda_{1,j}\, F_1(x^*_{1,j}, x^*_{2,j}), \\ x^*_{2,j}=y_2 +\lambda_{2,j}\,F_2(x^*_{1,j}, x^*_{2,j}). \end{array} \right. \qquad (j=1,2).
	\end{equation*}    
\end{thm}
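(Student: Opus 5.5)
The plan is to reduce Theorem~\ref{th_sys_kb} to the non-affine statement, Theorem~2.5 of \cite{acgijrl}, by the same translation used in the proof of Theorem~\ref{th_affine_sys}. I would also indicate how that non-affine result follows from Theorem~\ref{th_sys}, using the symmetry of the second factor $X_2$.

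\emph{Step 1 (translation).} Set $G_i(w_1,w_2):=F_i(y_1+w_1,y_2+w_2)$ for $(w_1,w_2)\in \overline{K}_{1,r_1}\times \overline{B}_{2,r_2}$. Translation is an isometry of the domain, so $G=(G_1,G_2)$ is compact and takes values in $K_1\times X_2$. The boundary sets $\partial K_{1,r_1}^{y_1}$ and $\partial B_{2,r_2}^{y_2}$ correspond exactly to $\|w_1\|_1=r_1$ and $\|w_2\|_2=r_2$. Hence the two infimum conditions become
\[
\inf_{\|w_1\|_1=r_1,\ \|w_2\|_2=r_2}\|G_i(w_1,w_2)\|_i>0, \qquad i=1,2.
\]
Under this change of variables, the system $x_i=y_i+\lambda_i F_i(x_1,x_2)$ is equivalent to $w_i=\lambda_i G_i(w_1,w_2)$.

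\emph{Step 2 (first solution).} Theorem~\ref{th_sys} applies to $G$ with $C_1=K_1$ and $C_2=X_2$. It gives $\lambda_{1,1},\lambda_{2,1}>0$ and $(w^*_{1,1},w^*_{2,1})$ with the prescribed norms. Setting $x^*_{i,1}=y_i+w^*_{i,1}$ produces the first solution.

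\emph{Step 3 (second solution, negative parameter).} Consider $\widetilde G=(G_1,-G_2)$. It is still compact, still maps into $K_1\times X_2$ because $X_2=-X_2$, and satisfies the same infimum conditions because $\|-G_2\|_2=\|G_2\|_2$. Theorem~\ref{th_sys} then gives $\mu_1,\mu_2>0$ and $(w^*_{1,2},w^*_{2,2})$ with $\|w^*_{1,2}\|_1=r_1$, $\|w^*_{2,2}\|_2=r_2$, such that
\[
w^*_{1,2}=\mu_1G_1(w^*_{1,2},w^*_{2,2}),\qquad w^*_{2,2}=-\mu_2G_2(w^*_{1,2},w^*_{2,2}).
\]
Putting $\lambda_{1,2}=\mu_1>0$, $\lambda_{2,2}=-\mu_2<0$ and translating back gives the second solution.

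The only point needing care is whether the two solutions are genuinely distinct. They are, in the sense intended by the statement: the second one has $\lambda_{2,2}<0$, while the first has $\lambda_{2,1}>0$. Since $w^*_{2,j}\neq 0$, the two pairs $(\lambda,x)$ cannot coincide. I expect no real obstacle beyond checking that the translated domain and boundary correspond exactly, so that the hypotheses of Theorem~\ref{th_sys} are met.
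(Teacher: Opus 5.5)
Your proof is correct and follows essentially the paper's route: the paper obtains this result by citing Theorem~2.5 of \cite{acgijrl}, after the same substitution $w_i=x_i-y_i$ used in the proof of Theorem~\ref{th_affine_sys}. Your Step~3 supplies the sign-change argument behind that cited result, namely applying Theorem~\ref{th_sys} to $(G_1,-G_2)$, which is legitimate since $X_2=-X_2$ and $\|-G_2\|_2=\|G_2\|_2$; this also explains the four couples in the remark that follows, and it makes your proof self-contained.
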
 

\begin{rem}
Note that, under the assumptions of Theorem \ref{th_affine_sys}, if both $C_1$ and $C_2$ are
infinite dimensional normed spaces, then there exist four couples of numbers
$\lambda_1,\lambda_2$ and associated points $(x^*_1, x^*_2)$ such that \eqref{eq_sol_sys}
holds; see Remark 2.6 in \cite{acgijrl}.
\end{rem}

\section{An application to systems of functional differential equations}

Motivated by \eqref{sys_diff_intro},
we investigate a differential system with functional terms and subject to initial and final conditions.
In this Section, for $i=1,2$, we fix $\sigma_i>0$ and $\tau_i>0$ and denote by
$(X_i,\|\ \|_i)$ the Banach space $C[-\sigma_i, \tau_i]$ endowed with the usual supremum norm.
We consider the system ($i=1,2$)
\begin{equation} \label{sys_diff}
\left\{\begin{array}{rll}
-u_{i}''(t) &= \lambda_{i}f_{i}(t, u_{1}(t), u_{2}(t), H_{i}(u_1,u_2)(t)), \quad & t \in (0, 1), \\
u_{i}(t) &= \psi_{i}(t), \quad & t \in [-\sigma_i, 0], \\
u_{i}(t) &= \varphi_{i}(t), \quad & t \in [1, \tau_i],
\end{array} \right.
\end{equation}
where $\lambda_{i} \in \R$,
the operator $H_{i}$ is defined in a suitable subset of $X_{1} \times X_{2}$, 
the real valued function $f_i$ is defined in a subset of $\R^4$, and the initial and final conditions, respectively, are continuous functions
$\psi_{i}:[-\sigma_i, 0]\to\R$
and
$\varphi_{i}: [1, \tau_i]\to\R$.

In order to apply the results of the previous Section, we rewrite system \eqref{sys_diff} in an equivalent integral form. Namely,
\begin{equation} \label{sys_integral}
u_{i}(t) = \phi_{i}(t) + \lambda_{i} \int_{0}^{1} k_{i}(t,s) f_{i}(s, u_{1}(s), u_{2}(s), H_{i}[u_{1},u_{2}](s)) \,ds,\ i=1,2,\ t\in [-\sigma_i,\tau_i].
\end{equation}
Here the function $\phi_i:[-\sigma_i,\tau_i]\to\R$ is defined as
\[
\phi_i(t) = 
\begin{cases} 
\psi_{i}(t), & t \in [-\sigma_i, 0], \\
(1-t)\psi_{i}(0)+t\varphi_{i}(1), & t \in (0, 1), \\
\varphi_{i}(t), & t \in [1, \tau_i],
\end{cases}
\]
while the kernel appearing in the integral term is given by
\[
k_i(t,s)=
\begin{cases}
    \hat{k}_i(t,s), & \text{if } (t,s)\in[0,1]\times[0,1],\\
    0,& \text{if } (t,s)\in([-\sigma_i,\tau_i]\setminus(0,1))\times [0,1],
\end{cases}
\]
where $\hat{k}_i$ is the Green's function associated to the BVP
\begin{equation*}
\left\{\begin{array}{rll}
-u_{i}''(t) &= h_{i}(t), \quad & t \in [0, 1], \\
u_{i}(0) &= \psi_{i}(0), \\
u_{i}(1) &= \varphi_{i}(1).
\end{array} \right.
\end{equation*}

\begin{defn} \label{def_solution}
By a solution of the functional BVP \eqref{sys_diff} we mean a solution
$(u_1,u_2)\in X_1\times X_2$
of the integral system \eqref{sys_integral}.
\end{defn}

In the Banach space $X_1$ we consider the cone $K_1$ of nonnegative functions. 
Let, for $i=1,2$, $\rho_{i}>0$ be given and assume that the operator $H_i$ is defined (at least) on
$\overline{K_{1, \rho_{1}}^{\phi_{1}}} \times \overline{B_{2, \rho_{2}}^{\phi_{2}}}$.
Let
$F_{i}: \overline{K_{1, \rho_{1}}^{\phi_{1}}} \times \overline{B_{2, \rho_{2}}^{\phi_{2}}} \rightarrow X_i$
be defined by
\begin{equation}
\label{def_int.op}
F_i(u_1,u_2)(t) = \int_{0}^{1} k_{i}(t,s) f_{i}(s, u_{1}(s), u_{2}(s), H_{i}[u_{1},u_{2}](s)) \,ds,
\ t\in[-\sigma_i,\tau_i],
\end{equation}
so that the system~\eqref{sys_integral} takes the form
\begin{equation*}
\begin{cases}
    u_1(t) = \phi_1(t) + \lambda_1 F_1(u_1,u_2)(t),&t\in[-\sigma_1,\tau_1],\\
    u_2(t) = \phi_2(t) + \lambda_2 F_2(u_1,u_2)(t),&t\in[-\sigma_2,\tau_2],
\end{cases}
\end{equation*}
which in turn can be written as a fixed-point system in the product space $X_1 \times X_2$, namely,
\begin{equation}
\begin{cases}\label{sys_abstract}
    u_1 = \phi_1  + \lambda_1 F_1(u_1,u_2),\\
    u_2 = \phi_2  + \lambda_2 F_2(u_1,u_2).
\end{cases}
\end{equation}
With the above ingredients, we can state the following existence result.

\begin{thm}\label{ThmG}
Let $\rho_1,\rho_2>0$ be as above and assume that the following conditions hold~$(i=1,2)$:
\begin{enumerate}
        
    \item $H_i:\overline{K^{\phi_1}_{\rho_1}} \times \overline{B^{\phi_2}_{\rho_2}}\to C[0,1]$ is continuous and bounded by two numbers $\underline{H_{i,\rho_1,\rho_2}}, \overline{H_{i,\rho_1,\rho_2}}$, that is
    \[
    \underline{H_{i,\rho_1,\rho_2}} \leq H_i(u_1,u_2)(t) \leq \overline{H_{i,\rho_1,\rho_2}} \quad \text{for all } (t,u_1,u_2) \in [0,1]\times \overline{K^{\phi_1}_{\rho_1}} \times \overline{B^{\phi_2}_{\rho_2}}.
    \]
    
    \item $f_i: \Pi_{i,\rho_1,\rho_2} \subset \R^4 \to \R$ is continuous, where
    \[
    \Pi_{i,\rho_1,\rho_2} = [0,1] \times \left[\min_{[-\sigma_1,\tau_1]}\phi_1,\max_{[-\sigma_1,\tau_1]}\phi_1+\rho_1\right] \times \left[\min_{[-\sigma_2,\tau_2]}\phi_2-\rho_2,\max_{[-\sigma_2,\tau_2]}\phi_2+\rho_2\right] \times [\underline{H_{i,\rho_1,\rho_2}},\overline{H_{i,\rho_1,\rho_2}}].
    \]
    
    \item There exist two continuous functions $\overline{f_{i,\rho_1,\rho_2}},\underline{f_{i,\rho_1,\rho_2}}:[0,1]\to\R$ such that
    \[
    \underline{f_{i,\rho_1,\rho_2}}(t) \leq f_i(t,u,v,w) \leq \overline{f_{i,\rho_1,\rho_2}}(t) \quad \text{for all } \big(t,u,v,w\big) \in \Pi_{i,\rho_1,\rho_2}.
    \]
    
    \item Define, for $t\in[-\sigma_i,\tau_i]$,
    \[
    \underline{F_{i,\rho_1,\rho_2}}(t) = \int_0^1 k_i(t,s)\underline{f_{i,\rho_1,\rho_2}}(s)\,ds \quad\text{and}\quad \overline{F_{i,\rho_1,\rho_2}}(t) = \int_0^1 k_i(t,s)\overline{f_{i,\rho_1,\rho_2}}(s)\,ds
    \]
    and assume that at least one between $\ref{a}-\ref{b}$ and at least one between $\ref{c}-\ref{d}$ hold, where:
    \begin{enumerate}[label=(4\alph*)]
        \item \label{a} There exists $t_{1,\rho_1,\rho_2} \in [-\sigma_1, \tau_1]$ such that $\overline{F_{1,\rho_1,\rho_2}}(t_{1,\rho_1,\rho_2})<0$;\vspace{1mm}
        \item \label{b} There exists $t_{1,\rho_1,\rho_2} \in [-\sigma_1, \tau_1]$ such that $\underline{F_{1,\rho_1,\rho_2}}(t_{1,\rho_1,\rho_2})>0$;\vspace{1mm}
        \item \label{c} There exists $t_{2,\rho_1,\rho_2} \in [-\sigma_2, \tau_2]$ such that $\overline{F_{2,\rho_1,\rho_2}}(t_{2,\rho_1,\rho_2})<0$;\vspace{1mm}
        \item \label{d} There exists $t_{2,\rho_1,\rho_2} \in [-\sigma_2, \tau_2]$ such that $\underline{F_{2,\rho_1,\rho_2}}(t_{2,\rho_1,\rho_2})>0$.\vspace{1mm}
    \end{enumerate}
\end{enumerate}
Then there exist
$\lambda_{1,1},\lambda_{2,1},\lambda_{1,2} >0$, $\lambda_{2,2}<0$ and $(u_{1,j},u_{2,j})\in \overline{K^{\phi_1}_{\rho_1}} \times \overline{B^{\phi_2}_{\rho_2}} $, $j=1,2$, that satisfy the system~\eqref{sys_diff}.
More precisely,
\begin{gather*}
    (u_{1,1},u_{2,1})=(\phi_1+v_{1,1},\phi_2+v_{2,1}), \text{ with } \| v_{1,1}\|_1=\rho_1, \| v_{2,1}\|_2=\rho_2,\\
    (u_{1,2},u_{2,2})=(\phi_1+v_{1,2},\phi_2+v_{2,2}), \text{ with } \| v_{1,2}\|_1=\rho_1, \| v_{2,2}\|_2=\rho_2
\end{gather*}
are such that, for $j=1,2$,
\[
\begin{cases}
    u_{1,j}(t) = \phi_1(t) + \lambda_{1,j} \int_0^1 k_1(t,s) f_1(s,u_{1,j}(s),u_{2,j}(s),H_1[u_{1,j},u_{2,j}](s)) \,ds,\ t \in [-\sigma_1,\tau_1], \\
    u_{2,j}(t) = \phi_2(t) + \lambda_{2,j} \int_0^1 k_2(t,s) f_2(s,u_{1,j}(s),u_{2,j}(s),H_2[u_{1,j},u_{2,j}](s)) \,ds,\ t \in [-\sigma_2,\tau_2]. \\
\end{cases}
\]
Moreover, the following implications hold:
    \begin{align*}
        \ref{a} &\quad \Rightarrow \quad |\lambda_{1,j}| \leq -\rho_1 / \overline{F_{1,\rho_1,\rho_2}}(t_{1,\rho_1,\rho_2});\\
        \ref{b} &\quad \Rightarrow \quad |\lambda_{1,j}| \leq \rho_1 / \underline{F_{1,\rho_1,\rho_2}}(t_{1,\rho_1,\rho_2});\\
        \ref{c} &\quad \Rightarrow \quad |\lambda_{2,j}| \leq -\rho_2 / \overline{F_{2,\rho_1,\rho_2}}(t_{2,\rho_1,\rho_2});\\
        \ref{d} &\quad \Rightarrow \quad |\lambda_{2,j}| \leq \rho_2 / \underline{F_{2,\rho_1,\rho_2}}(t_{2,\rho_1,\rho_2}).
    \end{align*}
Finally, we also have that, for $i,j \in \{1,2\}$,
\[
|\lambda_{i,j}| \geq \rho_i / \max\{\| \underline{F_{i,\rho_1,\rho_2}} \|_i, \| \overline{F_{i,\rho_1,\rho_2}} \|_i\}.
\]
\end{thm}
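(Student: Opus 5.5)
The plan is to apply Theorem~\ref{th_sys_kb} with $X_1=C[-\sigma_1,\tau_1]$, $K_1$ the cone of nonnegative functions, $X_2=C[-\sigma_2,\tau_2]$ (infinite dimensional), $y_i=\phi_i$, $r_i=\rho_i$, and $F=(F_1,F_2)$ given by \eqref{def_int.op}. Once its hypotheses are verified, the four parameters $\lambda_{i,j}$ (with $\lambda_{2,2}<0$) and the solution pairs $(\phi_1+v_{1,j},\phi_2+v_{2,j})$ with $\|v_{i,j}\|_i=\rho_i$ come directly from that theorem. By Definition~\ref{def_solution} these pairs solve \eqref{sys_diff}.

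The verification has three parts.

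First, \emph{well-definedness}. If $u_1\in\overline{K^{\phi_1}_{\rho_1}}$ and $u_2\in\overline{B^{\phi_2}_{\rho_2}}$, then for $s\in[0,1]$ the point $(s,u_1(s),u_2(s),H_i[u_1,u_2](s))$ lies in $\Pi_{i,\rho_1,\rho_2}$. Indeed, $u_1=\phi_1+w_1$ with $0\le w_1\le\rho_1$, $|u_2-\phi_2|\le\rho_2$, and hypothesis (1) bounds $H_i$. So by (2) the integrand is continuous.

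Second, \emph{compactness}. The integrand is bounded by $\max\{\|\underline{f_{i,\rho_1,\rho_2}}\|_\infty,\|\overline{f_{i,\rho_1,\rho_2}}\|_\infty\}$ by (3), and $k_i$ is continuous on $[-\sigma_i,\tau_i]\times[0,1]$. The latter holds because the Green's function $\hat k_i(t,s)=\min(t,s)(1-\max(t,s))$ vanishes at $t=0,1$, so its extension by zero is continuous. An Arzelà–Ascoli argument then gives relatively compact images. Continuity of $F_i$ follows from continuity of $H_i$ into $C[0,1]$ together with uniform continuity of $f_i$ on the compact set $\Pi_{i,\rho_1,\rho_2}$.

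Third, the \emph{cone condition} $F_1\ge0$, i.e.\ $F_1$ must take values in $K_1$. This is the step I expect to be the main obstacle, since nothing in the hypotheses as listed forces $f_1\ge 0$. Under \ref{b} alone this is not automatic. I would try to deduce it from the requirement, implicit in applying Theorem~\ref{th_sys_kb}, that $F$ map into $K_1\times X_2$ (positivity of $k_1$ plus $f_1\ge0$). Failing that, I would observe that under \ref{a} one can replace $F_1$ by $-F_1$: this gives $-F_1\ge0$ when $f_1\le0$, and the sign of $\lambda_{1,j}$ flips accordingly. In the proof I would treat this as a tacit sign assumption on $f_1$ that matches whichever of \ref{a}, \ref{b} holds.

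The norm-positivity condition and the upper bounds on $|\lambda|$ come from a pointwise comparison. Since $k_i\ge0$, hypothesis (3) gives
\[
\underline{F_{i,\rho_1,\rho_2}}(t)\le F_i(u_1,u_2)(t)\le\overline{F_{i,\rho_1,\rho_2}}(t).
\]
Under \ref{b} this yields $\|F_1(u_1,u_2)\|_1\ge F_1(u_1,u_2)(t_{1,\rho_1,\rho_2})\ge\underline{F_{1,\rho_1,\rho_2}}(t_{1,\rho_1,\rho_2})>0$ uniformly, and analogously under \ref{a} using $-\overline{F_{1,\rho_1,\rho_2}}(t_{1,\rho_1,\rho_2})$; the same argument handles $i=2$. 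These uniform bounds give the positive infima required by Theorem~\ref{th_sys_kb}.

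For the parameter estimates, start from $\rho_i=\|v_{i,j}\|_i=|\lambda_{i,j}|\,\|F_i(u_{1,j},u_{2,j})\|_i$. Using the lower bound above, this gives $|\lambda_{i,j}|\le\rho_i/\underline{F_{i,\rho_1,\rho_2}}(t_{i,\rho_1,\rho_2})$ under \ref{b} or \ref{d}, and the analogous bound with $-\overline{F}$ under \ref{a} or \ref{c}. In the other direction, the pointwise comparison also gives
\[
\|F_i(u_1,u_2)\|_i\le\max\{\|\underline{F_{i,\rho_1,\rho_2}}\|_i,\|\overline{F_{i,\rho_1,\rho_2}}\|_i\},
\]
which yields the final lower bound on $|\lambda_{i,j}|$.
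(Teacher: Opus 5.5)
Your route is the paper's own. You apply Theorem~\ref{th_sys_kb} to the operator of \eqref{def_int.op} with $y_i=\phi_i$, $r_i=\rho_i$, get well-definedness and compactness from (1)--(3), and use $k_i\ge0$ together with (3) to sandwich $\underline{F_{i,\rho_1,\rho_2}}\le F_i(u_1,u_2)\le\overline{F_{i,\rho_1,\rho_2}}$. From this sandwich and $\rho_i=|\lambda_{i,j}|\,\|F_i(u_{1,j},u_{2,j})\|_i$ you read off the positive infima and both bounds on $|\lambda_{i,j}|$. Your details are more careful than the paper's one-line ``because of assumptions (1)--(3)'': the continuity of the zero-extended kernel, and the continuity of $F_i$ via uniform continuity of $f_i$ on the compact set $\Pi_{i,\rho_1,\rho_2}$.

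The cone condition you single out is a genuine defect, but it is a defect of the statement, not of your reasoning. The paper's proof simply declares that every hypothesis of Theorem~\ref{th_sys_kb} holds and never checks that $F_1(u_1,u_2)\in K_1$. That cannot be checked from (1)--(4).

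\emph{Under (4a).} Since $k_1\ge0$, every pair in $\overline{K^{\phi_1}_{\rho_1}}\times\overline{B^{\phi_2}_{\rho_2}}$ satisfies $F_1(u_1,u_2)(t_{1,\rho_1,\rho_2})\le\overline{F_{1,\rho_1,\rho_2}}(t_{1,\rho_1,\rho_2})<0$. Hence $v_{1,j}=\lambda_{1,j}F_1(u_{1,j},u_{2,j})$ with $\lambda_{1,j}>0$ is negative at $t_{1,\rho_1,\rho_2}$ and cannot lie in $K_1$, so the claimed conclusion is impossible.

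\emph{Under (4b).} The conclusion can still fail. Take $f_1(t,u,v,w)=\sin(2\pi t)$, $f_2\equiv1$, $H_i\equiv0$. Then $F_1(u_1,u_2)(t)=\sin(2\pi t)/(4\pi^2)$ on $[0,1]$, so (4b) holds at $t=1/4$. Yet $\lambda F_1(u_1,u_2)\notin K_1$ for every $\lambda\neq0$, and $\lambda=0$ is excluded by $\|v_{1,j}\|_1=\rho_1$.

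The right move is therefore to add an explicit hypothesis, e.g.\ $f_1\ge0$ on $\Pi_{1,\rho_1,\rho_2}$. It makes $F_1$ take values in $K_1$ and rules out (4a). It also holds in the paper's example, where $f_1=\cos(t)e^{u+w}>0$ on $[0,1]$. With this hypothesis stated, your proof is complete.

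Drop the idea of deducing positivity ``from the requirement implicit in applying Theorem~\ref{th_sys_kb}''; that is circular. Your $-F_1$ variant, under (4a) with $f_1\le0$, is valid. However, it yields $\lambda_{1,1},\lambda_{1,2}<0$, so it proves a corrected theorem rather than the one stated.
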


\begin{proof}
Let $\rho_1,\rho_2>0$ be fixed and, for $i=1,2$, let $F_{i}: \overline{K_{1, \rho_{1}}^{\phi_{1}}} \times \overline{B_{2, \rho_{2}}^{\phi_{2}}} \rightarrow X_i$
be as in 
\eqref{def_int.op}. Note that $F_i$ is well-defined and
 compact because of assumptions $(1)-(3)$;
thus, so is $F:=(F_1,F_2)$. Moreover, by $(4)$ we get
\[
\underline{F_{i,\rho_1,\rho_2}}(t) \leq F_{i}(u_1,u_2)(t) \leq \overline{F_{i,\rho_1,\rho_2}}(t) \quad\text{for every } (t,u_1,u_2) \in [-\sigma_i,\tau_i] \times \overline{K^{\phi_1}_{\rho_1}} \times \overline{B^{\phi_2}_{\rho_2}}.
\]
Now, given
$(u_1,u_2) \in \partial K_{\rho_1}^{\phi_1} \times \partial B_{\rho_2}^{\phi_2}$,
we obtain following estimates.
\begin{itemize}
\item If $\ref{a}$ holds, then
    \[
    \|F_1(u_1,u_2)\|_1 \geq |F_1(u_1,u_2)(t_{1,\rho_1,\rho_2})| = -F_1(u_1,u_2)(t_{1,\rho_1,\rho_2}) \geq -\overline{F_{1,\rho_1,\rho_2}}(t_{1,\rho_1,\rho_2}) > 0,
    \]
    which yields
    \[
    \inf_{ (u_1,u_2) \in \partial K_{\rho_1}^{\phi_1} \times \partial B_{\rho_2}^{\phi_2}} \| F_1(u_1,u_2) \|_1 > 0.
    \]

\item If $\ref{b}$ holds, then
    \[
    \|F_1(u_1,u_2)\|_1 \geq |F_1(u_1,u_2)(t_{1,\rho_1,\rho_2})| = F_1(u_1,u_2)(t_{1,\rho_1,\rho_2}) \geq \underline{F_{1,\rho_1,\rho_2}}(t_{1,\rho_1,\rho_2}) > 0,
    \]
    which yields
    \[
    \inf_{ (u_1,u_2) \in \partial K_{\rho_1}^{\phi_1} \times \partial B_{\rho_2}^{\phi_2}} \| F_1(u_1,u_2) \|_1 > 0.
    \]

\item If $\ref{c}$ holds, then
    \[
    \|F_2(u_1,u_2)\|_2 \geq |F_2(u_1,u_2)(t_{2,\rho_1,\rho_2})| = -F_2(u_1,u_2)(t_{2,\rho_1,\rho_2}) \geq -\overline{F_{2,\rho_1,\rho_2}}(t_{2,\rho_1,\rho_2}) > 0,
    \]
    which yields
    \[
    \inf_{ (u_1,u_2) \in \partial K_{\rho_1}^{\phi_1} \times \partial B_{\rho_2}^{\phi_2}} \| F_2(u_1,u_2) \|_2 > 0.
    \]

\item If $\ref{d}$ holds, then
    \[
    \|F_2(u_1,u_2)\|_2 \geq |F_2(u_1,u_2)(t_{2,\rho_1,\rho_2})| = F_2(u_1,u_2)(t_{2,\rho_1,\rho_2}) \geq \underline{F_{2,\rho_1,\rho_2}}(t_{2,\rho_1,\rho_2}) > 0,
    \]
    which yields
    \[
    \inf_{ (u_1,u_2) \in \partial K_{\rho_1}^{\phi_1} \times \partial B_{\rho_2}^{\phi_2}} \| F_2(u_1,u_2) \|_2 > 0.
    \]
\end{itemize}

In summary, every hypothesis of Theorem~\ref{th_sys_kb} is satisfied, hence there exist two solution pairs $(u_{1,j},u_{2,j})$, $j=1,2$, of the abstract system \eqref{sys_abstract}, corresponding, respectively, to the couples of parameters $(\lambda_{1,1},\lambda_{2,1}) \in (0,+\infty) \times (0,+\infty)$ and $(\lambda_{1,2},\lambda_{2,2}) \in (0,+\infty)\times(-\infty,0)$.
Thus, the first part of the assertion follows.

Now, to complete the proof, let us fix $(\lambda_1,\lambda_2) \in \{(\lambda_{1,1},\lambda_{2,1}),
(\lambda_{1,2},\lambda_{2,2})\}$
and consider the associated couple of functions $(u_1,u_2)=(\phi_1+v_1,\phi_2+v_2)$. For $i=1,2$:
\begin{itemize}
\item If $\ref{a}$ holds, then we have that
\begin{align*}
    &u_1(t) = \phi_1(t) + \lambda_1 F_1(u_1,u_2)(t) \quad\text{for all } t \in [-\sigma_1,\tau_1] \\
    \Rightarrow \ &u_1(t) - \phi_1(t) = \lambda_1 F_1(u_1,u_2)(t) \quad\text{for all } t \in [-\sigma_1,\tau_1] \\
    \Rightarrow \ & \rho_1=\|u_1-\phi_1\|_1 = |\lambda_1| \|F_1(u_1,u_2)\|_1 \geq -|\lambda_1|\overline{F_{1,\rho_1,\rho_2}}(t_{1,\rho_1,\rho_2}) > 0 \\
    \Rightarrow \ & |\lambda_1| \leq -\rho_1/\overline{F_{1,\rho_1,\rho_2}}(t_{1,\rho_1,\rho_2}).
\end{align*}

\item If $\ref{b}$ holds, then we have that
\begin{align*}
    &u_1(t) = \phi_1(t) + \lambda_1 F_1(u_1,u_2)(t) \quad\text{for all } t \in [-\sigma_1,\tau_1] \\
    \Rightarrow \ &u_1(t) - \phi_1(t) = \lambda_1 F_1(u_1,u_2)(t) \quad\text{for all } t \in [-\sigma_1,\tau_1] \\
    \Rightarrow \ & \rho_1=\|u_1-\phi_1\|_1 = |\lambda_1| \|F_1(u_1,u_2)\|_1 \geq |\lambda_1|\underline{F_{1,\rho_1,\rho_2}}(t_{1,\rho_1,\rho_2}) > 0 \\
    \Rightarrow \ & |\lambda_1| \leq \rho_1/\underline{F_{1,\rho_1,\rho_2}}(t_{1,\rho_1,\rho_2}).
\end{align*}

\item If $\ref{c}$ holds, then we have that
\begin{align*}
    &u_2(t) = \phi_2(t) + \lambda_2 F_2(u_1,u_2)(t) \quad\text{for all } t \in [-\sigma_2,\tau_2] \\
    \Rightarrow \ &u_2(t) - \phi_2(t) = \lambda_2 F_2(u_1,u_2)(t) \quad\text{for all } t \in [-\sigma_2,\tau_2] \\
    \Rightarrow \ & \rho_2=\|u_2-\phi_2\|_2 = |\lambda_2| \|F_2(u_1,u_2)\|_2 \geq -|\lambda_2|\overline{F_{2,\rho_1,\rho_2}}(t_{2,\rho_1,\rho_2}) > 0 \\
    \Rightarrow \ & |\lambda_2| \leq -\rho_2/\overline{F_{2,\rho_1,\rho_2}}(t_{2,\rho_1,\rho_2}).
\end{align*}

\item If $\ref{d}$ holds, then we have that
\begin{align*}
    &u_2(t) = \phi_2(t) + \lambda_2 F_2(u_1,u_2)(t) \quad\text{for all } t \in [-\sigma_2,\tau_2] \\
    \Rightarrow \ &u_2(t) - \phi_2(t) = \lambda_2 F_2(u_1,u_2)(t) \quad\text{for all } t \in [-\sigma_2,\tau_2] \\
    \Rightarrow \ & \rho_2=\|u_2-\phi_2\|_2 = |\lambda_2| \|F_2(u_1,u_2)\|_2 \geq |\lambda_2|\underline{F_{2,\rho_1,\rho_2}}(t_{2,\rho_1,\rho_2}) > 0 \\
    \Rightarrow \ & |\lambda_2| \leq \rho_2/\underline{F_{2,\rho_1,\rho_2}}(t_{2,\rho_1,\rho_2}).
\end{align*}
\end{itemize}
Finally, recall that if $a,b,c \in \R$ are such that $a\leq b \leq c$, then we have that
\[
|b| \leq \max\{|a|,|c|\}.
\]
This means that, for $i=1,2$ and for every $t \in [-\sigma_i,\tau_i]$,
\begin{align*}
|u_i(t)-\phi_i(t)| &= |\lambda_i| |F_{i,\rho_1,\rho_2}(u_1,u_2)(t)|\\
&\leq |\lambda_i| \max\{|\underline{F_{i,\rho_1,\rho_2}}(t)|,|\overline{F_{i,\rho_1,\rho_2}}(t)|\}\\
&\leq |\lambda_i| \max\{\|\underline{F_{i,\rho_1,\rho_2}}\|_i,\|\overline{F_{i,\rho_1,\rho_2}}\|_i\}.
\end{align*}
This yields
\[
\rho_i=\|u_i-\phi_i\|_i \leq |\lambda_i| \max\{\|\underline{F_{i,\rho_1,\rho_2}}\|_i,\|\overline{F_{i,\rho_1,\rho_2}}\|_i\}.
\]
If one between \ref{a}--\ref{d} holds, we have that $\max\{\|\underline{F_{i,\rho_1,\rho_2}}\|_i,\|\overline{F_{i,\rho_1,\rho_2}}\|_i\} > 0$, hence we have that
\[
|\lambda_i| \geq \rho_i/\max\{\|\underline{F_{i,\rho_1,\rho_2}}\|_i,\|\overline{F_{i,\rho_1,\rho_2}}\|_i\}.
\qedhere
\]
\end{proof}

\begin{rem}
    Note that if $(6b)$ or $(6d)$ hold, then we can obtain a better estimate as follows:
    \begin{itemize}
        \item If $(6b)$ holds, then we must have $\max_{[-\sigma_1,\tau_1]} \underline{F_{1,\rho_1,\rho_2}} > 0$, therefore we can consider $t_{1,\rho_1,\rho_2} \in \operatorname{argmax}(F_{1,\rho_1,\rho_2})$ to get the optimal estimate
        \[
        |\lambda_1| \leq \rho_1/\max_{[-\sigma_1,\tau_1]} \underline{F_{1,\rho_1,\rho_2}}.
        \]

        \item If $(6d)$ holds, then we must have $\max_{[-\sigma_2,\tau_2]} \underline{F_{2,\rho_1,\rho_2}} > 0$, therefore we can consider $t_{2,\rho_1,\rho_2} \in \operatorname{argmax}(F_{2,\rho_1,\rho_2})$ to get the optimal estimate
        \[
        |\lambda_2| \leq \rho_2/\max_{[-\sigma_2,\tau_2]} \underline{F_{2,\rho_1,\rho_2}}.
        \]
        
    \end{itemize}
\end{rem}

Now, we propose an example to show the applicability of the latter theorem to systems of two functional differential equations.

\begin{ex}
Let us consider the system
\begin{equation}\label{sys_example}
\begin{cases}
    -u_1''(t)=\lambda_1 f_1(t,u_1(t),u_2(t),H_1(u_1,u_2)(t)), &t \in (0,1), \\
    -u_2''(t)=\lambda_2 f_2(t,u_1(t),u_2(t),H_2(u_1,u_2)(t)), &t \in (0,1), \\
    u_1(t) = \frac{1}{3}t^2, &t \in \left[-\frac{\pi}{8},0\right], \\
    u_2(t) = -\frac{1}{3}t^3, &t \in \left[-\frac{\pi}{6},0\right], \\
    u_1(t) = \frac{1}{12}\sqrt{t-1}, &t \in \left[1,1+\frac{\pi}{8}\right], \\
    u_2(t) = -\frac{1}{3}(t-1)^2, &t \in \left[1,1+\frac{\pi}{16}\right].
\end{cases}
\end{equation}
This is a functional advanced-delayed differential system, that is a particular case of system~\eqref{sys_diff}, in the product space $X_{1} \times X_{2}$,
where $X_1=C[-\frac{\pi}{8}, 1+\frac{\pi}{8}]$ and $X_2=C[-\frac{\pi}{6}, 1+\frac{\pi}{16}]$.
The maps
$f_1,f_2: \R^4 \to \R$ are defined by
\begin{align*}
f_1(t,u,v,w) &= \cos(t)e^{u+w},\\
f_2(t,u,v,w) &= \sin \left( \frac{9}{5} \pi t \right)e^{v+w},
\end{align*}
while the operators $H_1,H_2$ are defined by setting, for $t \in (0,1)$,
\begin{align*}
H_1(u_1,u_2)(t) &= u_1\Big(t + \frac{3}{10}(1-t)\Big)+ \sin\left(u_2(t) + u_2\Big(t - \frac{1}{4}\arctan \|u_2\|\Big)\right),\\
H_2(u_1,u_2)(t) &= -u_2\Big(t - \frac{1}{10}\sin(2\pi t)\Big) + \frac{1}{2}\cos\Bigg(u_1(t) - u_1\Big(t + \frac{1}{8}\arctan \|u_1\|\Big)\Bigg).
\end{align*}
Note that system \eqref{sys_example} is of the form~\eqref{sys_diff}
and that state-dependent terms occur in the operators $H_1$ and $H_2$.

In the Banach space $X_1$ we consider the cone $K_1$ of nonnegative functions and we observe that
system \eqref{sys_example} may be rewritten into an equivalent system of integral equations according to Definition \ref{def_solution}.
In particular, we have that
\[
\phi_1(t)=
\begin{cases}
\frac{1}{3}t^2, &t \in \left[-\frac{\pi}{8},0\right], \\
0, &t \in [0,1], \\
\frac{1}{12}\sqrt{t-1}, &t \in \left[1,1+\frac{\pi}{8}\right],
\end{cases}
\]
and
\[
\phi_2(t)=
\begin{cases}
-\frac{1}{3}t^3, &t \in \left[-\frac{\pi}{6},0\right], \\
0, &t \in [0,1], \\
-\frac{1}{3}(t-1)^2, &t \in \left[1,1+\frac{\pi}{16}\right].
\end{cases}
\]

Now, to apply Theorem \ref{ThmG} we need to provide some suitable estimates.
Let $\rho_1,\rho_2>0$ be given and assume that $\|u_i-\phi_i\|_i=\rho_i$,
$i=1,2$. Adopting the notation of Theorem \ref{ThmG}, note that we may choose
\begin{align*}
\underline{H_{1,\rho_1,\rho_2}} &= -1,\quad \overline{H_{1,\rho_1,\rho_2}} = \rho_1+1,\\
\underline{H_{2,\rho_1,\rho_2}} &= -\rho_2-\frac{1}{2},\quad \overline{H_{2,\rho_1,\rho_2}} = \rho_2+\frac{1}{2}.
\end{align*}
Now, set
\[
g(t)= \sin\left(\frac{9}{5}\pi t\right),
\]
and call its positive and negative parts $g_+(t)$ and $g_-(t)$ respectively; then we may consider
\begin{align*}
\underline{f_{1,\rho_1,\rho_2}}(t) &= e^{m_1-1}\cos(t) = e^{-1}\cos(t),\\
\overline{f_{1,\rho_1,\rho_2}}(t) &= e^{M_1+1+2\rho_1}\cos(t),\\
\underline{f_{2,\rho_1,\rho_2}}(t) &= e^{m_2-2\rho_2-\frac{1}{2}}g_+(t)-e^{M_2+2\rho_2+\frac{1}{2}}g_-(t),\\ 
\overline{f_{2,\rho_1,\rho_2}}(t) &= e^{M_2+2\rho_2+\frac{1}{2}}g_+(t)-e^{m_2-2\rho_2-\frac{1}{2}}g_-(t),
\end{align*}
where
\begin{align*}
m_1=\min_{[-\frac{\pi}{8},1+\frac{\pi}{8}]}\phi_1=0,\qquad M_1=\max_{[-\frac{\pi}{8},1+\frac{\pi}{8}]}\phi_1=\frac{1}{12}\sqrt{\frac{\pi}{8}},\\
m_2=\min_{[-\frac{\pi}{6},1+\frac{\pi}{16}]}\phi_2=-\frac{\pi^2}{768},\qquad M_2=\max_{[-\frac{\pi}{6},1+\frac{\pi}{16}]}\phi_2=\frac{\pi^3}{648}.
\end{align*}

With the above choice of $\underline{f_{i,\rho_1,\rho_2}}$, $\overline{f_{i,\rho_1,\rho_2}}$ we have, as in condition (4) of  Theorem \ref{ThmG}, the corresponding maps $\underline{F_{i,\rho_1,\rho_2}}$, $\overline{F_{i,\rho_1,\rho_2}}$. Namely,
for every $t \in \left[ -\frac{\pi}{8}, 1+\frac{\pi}{8} \right]$,
\[
\underline{F_{1,\rho_1,\rho_2}}(t) = e^{-1}\int_0^1 k_1(t,s)\cos(s)\,ds,
\]
and
\[
\overline{F_{1,\rho_1,\rho_2}}(t) = e^{M_1+2\rho_1+1}\int_0^1 k_1(t,s)\cos(s)\,ds,
\]
and also, for every $t \in \left[ -\frac{\pi}{6}, 1+\frac{\pi}{16} \right]$,
\[
\underline{F_{2,\rho_1,\rho_2}}(t) = e^{m_2-2\rho_2-\frac{1}{2}} \int_0^{5/9} k_2(t,s)\sin(bs)\,ds + e^{M_2+2\rho_2+\frac{1}{2}} \int_{5/9}^1 k_2(t,s)\sin(bs)\,ds
\]
and
\[
\overline{F_{2,\rho_1,\rho_2}}(t) = e^{M_2+2\rho_2+\frac{1}{2}} \int_0^{5/9} k_2(t,s)\sin(bs)\,ds + e^{m_2-2\rho_2-\frac{1}{2}} \int_{5/9}^1 k_2(t,s)\sin(bs)\,ds,
\]
where $b=9\pi/5$,
\[
k_1(t,s)=
\begin{cases}
    t(1-s),&\text{if } 0\leq t\leq s \leq 1,\\
    s(1-t),&\text{if } 0\leq s\leq t \leq 1,\\
    0,&\text{if } t \in \left[ -\frac{\pi}{8}, 1+\frac{\pi}{8} \right]\setminus(0,1).
\end{cases}
\]
and
\[
k_2(t,s)=
\begin{cases}
    t(1-s),&\text{if } 0\leq t\leq s \leq 1,\\
    s(1-t),&\text{if } 0\leq s\leq t \leq 1,\\
    0,&\text{if } t \in \left[ -\frac{\pi}{6}, 1+\frac{\pi}{16} \right]\setminus(0,1).
\end{cases}
\]
Firstly note that $\underline{F_{1,\rho_1,\rho_2}}$ and $\overline{F_{1,\rho_1,\rho_2}}$ do not depend on $\rho_2$, while $\underline{F_{2,\rho_1,\rho_2}}$ and $\overline{F_{2,\rho_1,\rho_2}}$ do not depend on $\rho_1$. Moreover, since $\underline{F_{1,\rho_1,\rho_2}},\underline{F_{2,\rho_1,\rho_2}}, \overline{F_{1,\rho_1,\rho_2}}$ and $\overline{F_{2,\rho_1,\rho_2}}$ are zero outside the interval $[0,1]$, to apply Theorem~\ref{ThmG} we can focus on their restrictions on $[0,1]$, where both $k_1$ and $k_2$ are equal to
\[
k(t,s)=
\begin{cases}
    t(1-s),&\text{if } 0\leq t\leq s \leq 1,\\
    s(1-t),&\text{if } 0\leq s\leq t \leq 1.
\end{cases}
\]
First of all, a direct computation yields, for every $t \in [0,1]$,
\[
\int_0^1 k(t,s)\cos(s)\,ds = \cos(t)+t(1-\cos(1))-1,
\]
whose maximum point is $t_0=\arcsin(1-\cos(1)) \in [0,1]$. Hence,
\[
\underline{F_{1,\rho_1,\rho_2}}(t_0) \geq e^{-1}[\cos(t_0)+t_0(1-\cos(1))-1]>0,
\]
which implies that for every $\rho_1,\rho_2>0$ there exists $t_{1,\rho_1,\rho_2}=t_0 \in [0,1]$ such that
\[
\underline{F_{1,\rho_1,\rho_2}}(t_{1,\rho_1,\rho_2})>0.
\]
The graphs of the functions $\underline{F_{1,\rho_1,\rho_2}}, \overline{F_{1,\rho_1,\rho_2}}$ are illustrated in Figure~\ref{fig:F1 upp and low}.
\begin{figure}[h]
    \centering
    \includegraphics[width=0.62\linewidth]{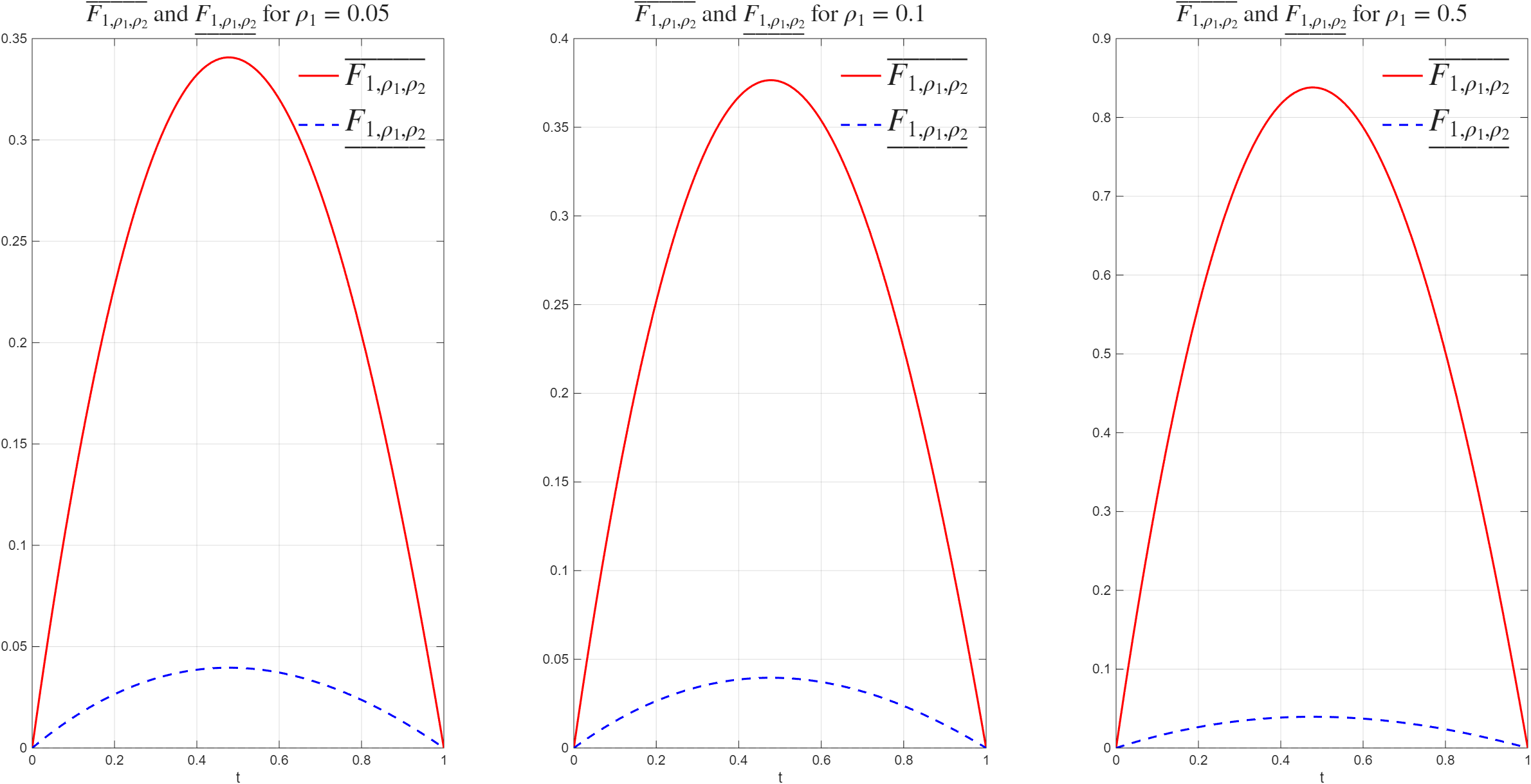}
    \caption{Plot of the functions $\overline{F_{1,\rho_1,\rho_2}}$ and $\underline{F_{1,\rho_1,\rho_2}}$} for $\rho_1=0.05$, $\rho_1=0.1$ and $\rho_1=0.5$.
    \label{fig:F1 upp and low}
\end{figure}\\
Also by a direct computation, for every $t \in [0,1]$, we have
\[
\int_{0}^{5/9} k(t, s) \sin(bs) \, ds = 
\begin{cases} 
\dfrac{25 \sin(bt)}{81 \pi^2} + \dfrac{20t}{81 \pi}, & \text{if } t \le 5/9, \vspace{10pt}\\
\dfrac{25(1-t)}{81 \pi}, & \text{if } t > 5/9,
\end{cases}
\]
and
\[
\int_{5/9}^{1} k(t, s) \sin(bs) \, ds = 
\begin{cases} 
-\dfrac{25 \sin(b)}{81 \pi^2}t - \dfrac{20t}{81 \pi}, & \text{if } t \le 5/9, \\[10pt]
\dfrac{25 \sin(bt)}{81 \pi^2} - \dfrac{25(1-t)}{81 \pi} - \dfrac{25 \sin(b)}{81 \pi^2}t, & \text{if } t > 5/9.
\end{cases}
\]
In summary, for every $t \in \left[ -\frac{\pi}{6}, 1+\frac{\pi}{16} \right]$, $\overline{F_{2,\rho_1,\rho_2}}(t)$ and $\underline{F_{2,\rho_1,\rho_2}}(t)$ read as:
\[
\overline{F_{2,\rho_1,\rho_2}}=
\begin{cases}
\dfrac{5e^{M_2+2\rho_2+\frac{1}{2}}}{81\pi} \left(\dfrac{5 \sin(bt)}{\pi} + 4t\right) - \dfrac{5e^{m_2-2\rho_2-\frac{1}{2}}}{81\pi}\left(\dfrac{5 \sin(b)}{\pi} + 4\right)t,
&\text{if } t\in[0,5/9],\\

\dfrac{25e^{M_2+2\rho_2+\frac{1}{2}}}{81\pi}(1-t) + \dfrac{25e^{m_2-2\rho_2-\frac{1}{2}}}{81\pi} \left(\dfrac{\sin(bt)}{\pi} +t-1 - \dfrac{ \sin(b)}{\pi}t\right),
&\text{if } t \in (5/9,1],\\

0, &\text{if } t \notin [0,1].\\
\end{cases}
\]
and
\[
\underline{F_{2,\rho_1,\rho_2}}=
\begin{cases}
\dfrac{5e^{m_2-2\rho_2-\frac{1}{2}}}{81\pi} \left(\dfrac{5 \sin(bt)}{\pi} + 4t\right) - \dfrac{5e^{M_2+2\rho_2+\frac{1}{2}}}{81\pi}\left(\dfrac{5 \sin(b)}{\pi} + 4\right)t,
&\text{if } t\in[0,5/9],\\

\dfrac{25e^{m_2-2\rho_2-\frac{1}{2}}}{81\pi}(1-t) + \dfrac{25e^{M_2+2\rho_2+\frac{1}{2}}}{81\pi} \left(\dfrac{\sin(bt)}{\pi} +t-1 - \dfrac{ \sin(b)}{\pi}t\right), 
&\text{if } t \in (5/9,1],\\

0, &\text{if } t \notin [0,1].\\
\end{cases}
\]
\begin{figure}[h]
    \centering
    \includegraphics[width=0.62\linewidth]{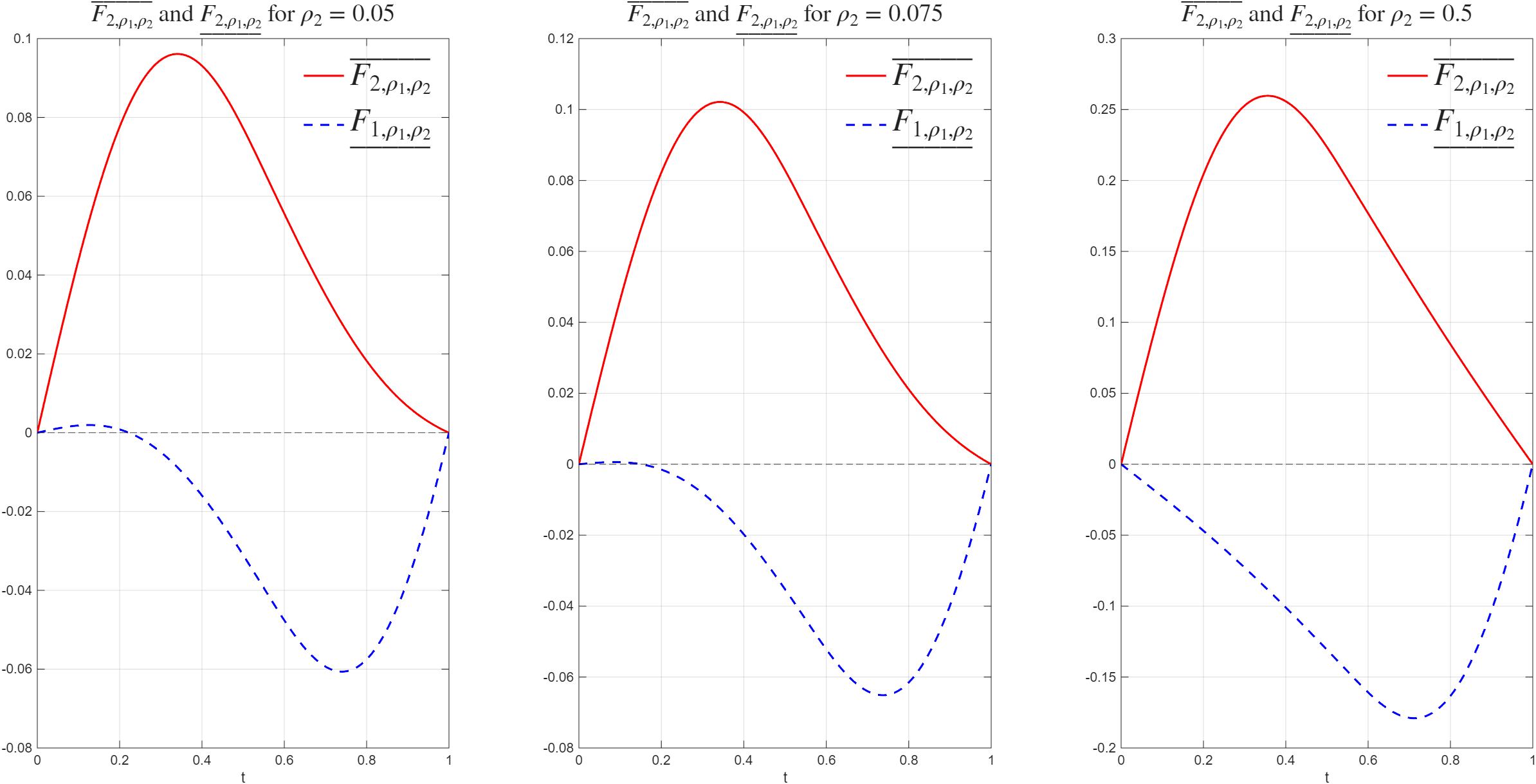}
    \caption{Plot of the functions $\overline{F_{2,\rho_1,\rho_2}}$ and $\underline{F_{2,\rho_1,\rho_2}}$} for $\rho_2=0.05$, $\rho_2=0.075$ and $\rho_2=0.5$.
    \label{fig:F2 upp and low}
\end{figure}\\
Note that $\underline{F_{2,\rho_1,\rho_2}}$ and $\overline{F_{2,\rho_1,\rho_2}}$ are $C^1([0,1])$ and they are zero at $t=0$ and $t=1$.\\
Now, we search for the absolute minimum points in $[0,1]$ of $\overline{F_{2,\rho_1,\rho_2}}$.

Let us start with the interval $[0,5/9]$:
\begin{align*}
\overline{F_{2,\rho_1,\rho_2}}'(t) \geq 0 \quad&\iff\quad
e^{M_2+2\rho_2+\frac{1}{2}} \left(9\cos(bt) + 4\right) - e^{m_2-2\rho_2-\frac{1}{2}}\left(\dfrac{5 \sin(b)}{\pi} + 4\right) \geq 0 \\ &\iff\quad
e^{M_2+2\rho_2+\frac{1}{2}} \left(9\cos(bt) + 4\right) \geq e^{m_2-2\rho_2-\frac{1}{2}}\left(\dfrac{5 \sin(b)}{\pi} + 4\right) \\ &\iff\quad
\cos(bt) \geq \frac{e^{m_2-M_2-4\rho_2-1}}{9}\left(\dfrac{5 \sin(b)}{\pi} + 4\right) - \frac{4}{9} =: \gamma_{\rho_2}.
\end{align*}
The last inequality is solvable if and only if $\gamma_{\rho_2} \in [-1,1]$. Actually, this is always true for every $\rho_2>0$. Indeed,
\begin{align*}
\frac{e^{m_2-M_2-4\rho_2-1}}{9}\left(\dfrac{5 \sin(b)}{\pi} + 4\right) - \frac{4}{9} < 1 \quad&\iff\quad e^{m_2-M_2-4\rho_2-1}\left(\dfrac{5 \sin(b)}{\pi} + 4\right) < 13.
\end{align*}
Since the (LHS) of the latter inequality is smaller than $\dfrac{5 \sin(b)}{\pi} + 4 \approx 3.06$, $\gamma_{\rho_2}<1$ for every $\rho_2>0$. At the same time,
\begin{align*}
\frac{e^{m_2-M_2-4\rho_2-1}}{9}\left(\dfrac{5 \sin(b)}{\pi} + 4\right) - \frac{4}{9} > -1 \quad&\iff\quad e^{m_2-M_2-4\rho_2-1}\left(\dfrac{5 \sin(b)}{\pi} + 4\right) > -5.
\end{align*}
Since the (LHS) of the last inequality is always positive, it is always true, hence $\gamma_{\rho_2}>-1$ for every $\rho_2>0$.
This means that $\overline{F_{2,\rho_1,\rho_2}}$ is increasing for $t\leq \frac{5}{9\pi}\arccos(\gamma_{\rho_2}) \in [0,5/9]$ and decreasing for $t \geq \frac{5}{9\pi}\arccos(\gamma_{\rho_2})$. Therefore, its minimum point in $[0,5/9]$ is $t=0$, since the function is positive at $t=5/9$, independently of $\rho_2$.

Now, we study $\overline{F_{2,\rho_1,\rho_2}}$ in $[5/9,1]$:
\begin{align*}
\overline{F_{2,\rho_1,\rho_2}}'(t) \geq 0 \quad&\iff\quad
-\dfrac{25e^{M_2+2\rho_2+\frac{1}{2}}}{81\pi} + \dfrac{25e^{m_2-2\rho_2-\frac{1}{2}}}{81\pi} \left(\dfrac{9\cos(bt)}{5} +1 - \dfrac{ \sin(b)}{\pi}\right) \geq 0\\ &\iff\quad
e^{m_2-2\rho_2-\frac{1}{2}} \left(\dfrac{9\cos(bt)}{5} +1 - \dfrac{ \sin(b)}{\pi}\right) \geq e^{M_2+2\rho_2+\frac{1}{2}} \\ &\iff\quad
\cos(bt) \geq \frac{5}{9}\left( e^{M_2-m_2+4\rho_2+1} - 1 + \dfrac{ \sin(b)}{\pi} \right) =: \gamma_{\rho_2}.
\end{align*}
The latter inequality is solvable if and only if $\gamma_{\rho_2} \in [-1,\sup_{(5/9,1)}\cos(bt)]=[-1,\cos(b)]$, and this never happens independently on $\rho_2>0$. Indeed,
\begin{align*}
\gamma_{\rho_2}\leq \cos(b) \quad&\iff\quad e^{M_2-m_2+4\rho_2+1}-1+\frac{\sin(b)}{\pi} \leq \frac{9}{5}\cos(b) \\ &\iff\quad
e^{M_2-m_2+4\rho_2+1} \leq \frac{9}{5}\cos(b) + 1 - \frac{\sin(b)}{\pi} \approx2.643.
\end{align*}
Since $e^{M_2-m_2+4\rho_2+1}>e$, the latter inequality is always false, and this implies that $\overline{F_{2,\rho_1,\rho_2}}$ is decreasing in $[5/9,1]$, hence its absolute minimum in $[0,1]$ is $\overline{F_{2,\rho_1,\rho_2}}(0)=0$.

Now, we search for the absolute maximum points in $[0,1]$ of $\underline{F_{2,\rho_1,\rho_2}}$. We start with the interval $[0,5/9]$. By following the same idea we used for $\overline{F_{2,\rho_1,\rho_2}}$, we get that
\begin{align*}
\underline{F_{2,\rho_1,\rho_2}}'(t) \geq 0 \quad\iff\quad
\cos(bt) \geq \frac{e^{M_2-m_2+4\rho_2+1}}{9}\left(\dfrac{5 \sin(b)}{\pi} + 4\right) - \frac{4}{9} =: \gamma_{\rho_2}.
\end{align*}
Note that, for every $\rho_2>0$,
\[
\gamma_{\rho_2} > \frac{e}{9}\left(\dfrac{5 \sin(b)}{\pi} + 4\right) - \frac{4}{9} \approx 0.4811 > -1.
\]
At the same time,
\begin{align*}
\gamma_{\rho_2} < 1 \quad&\iff\quad e^{M_2-m_2+4\rho_2+1} < \frac{13}{\dfrac{5 \sin(b)}{\pi} + 4} \approx 4.2421 \\
&\iff\quad \rho_2 < \frac{1}{4}\log\left( \frac{13}{\dfrac{5 \sin(b)}{\pi} + 4} \right) - \frac{M_2-m_2+1}{4} =: \alpha \approx 0.09609.
\end{align*}
Therefore, we can solve $\underline{F_{2,\rho_1,\rho_2}}'(t) \geq 0$, for $t \in [0,5/9]$, if and only if $\rho_2<\alpha$ and, in such cases,
\begin{align*}
\max_{[0,5/9]} \underline{F_{2,\rho_1,\rho_2}} &= \max\left\{ \underline{F_{2,\rho_1,\rho_2}}(0), \underline{F_{2,\rho_1,\rho_2}}\left( \frac{5}{9\pi}\arccos\left(\frac{e^{M_2-m_2+4\rho_2+1}}{9}\left(\frac{5\sin(b)}{\pi}+4\right)-\frac{4}{9}\right) \right) \right\} \\
&= \max\left\{ 0, \underline{F_{2,\rho_1,\rho_2}}\left( \frac{5}{9\pi}\arccos\left(\frac{e^{M_2-m_2+4\rho_2+1}}{9}\left(\frac{5\sin(b)}{\pi}+4\right)-\frac{4}{9}\right) \right) \right\}.
\end{align*}
Numerically, one may see that $\underline{F_{2,\rho_1,\rho_2}}\left( \frac{5}{9\pi}\arccos\left(\frac{e^{M_2-m_2+4\rho_2+1}}{9}\left(\frac{5\sin(b)}{\pi}+4\right)-\frac{4}{9}\right) \right)>0$ for every $\rho_2<\alpha$, hence
\[
\max_{[0,5/9]} \underline{F_{2,\rho_1,\rho_2}} = \underline{F_{2,\rho_1,\rho_2}}\left( \frac{5}{9\pi}\arccos\left(\frac{e^{M_2-m_2+4\rho_2+1}}{9}\left(\frac{5\sin(b)}{\pi}+4\right)-\frac{4}{9}\right) \right)>0
\]
for every $\rho_2<\alpha$.

Now, let us focus on the interval $[5/9,1]$. Again, by following the same idea we have applied for $\overline{F_{2,\rho_1,\rho_2}}$, we have that
\begin{align*}
\overline{F_{2,\rho_1,\rho_2}}'(t) \geq 0 \quad&\iff\quad
\cos(bt) \geq \frac{5}{9}\left( e^{m_2-M_2-4\rho_2-1} + \dfrac{ \sin(b)}{\pi} - 1 \right) =: \gamma_{\rho_2}.
\end{align*}
Note that, for every $\rho_2>0$,
\[
\gamma_{\rho_2} < \frac{5}{9}\left( e^{-1} + \dfrac{ \sin(b)}{\pi} - 1 \right) \approx -0.4551 < 0.809 \approx \cos(b).
\]
At the same time, we have
\begin{align*}
\gamma_{\rho_2} > -1 \quad&\iff\quad e^{m_2-M_2-4\rho_2-1} + \dfrac{ \sin(b)}{\pi} - 1 > -\frac{9}{5} \\
&\iff\quad e^{m_2-M_2-4\rho_2-1} > -\frac{4}{5} - \dfrac{ \sin(b)}{\pi},
\end{align*}
and this is always true since, for every $\rho_2>0$,
\[
e^{m_2-M_2-4\rho_2-1} > 0 > -\frac{4}{5} - \dfrac{ \sin(b)}{\pi}.
\]
Therefore, we can solve $\overline{F_{2,\rho_1,\rho_2}}'(t) \geq 0$ for every $\rho_2>0$, and in particular the latter inequality is equivalent to say that
\[
t \geq 2\pi - \frac{5}{9\pi}\arccos{\left( \frac{5}{9}\left( e^{m_2-M_2-4\rho_2-1} + \dfrac{ \sin(b)}{\pi} - 1 \right) \right)}.
\]
Therefore we obtain $$\max_{[5/9,1]}\underline{F_{2,\rho_1,\rho_2}}=\max\left\{  \underline{F_{2,\rho_1,\rho_2}}(5/9),\underline{F_{2,\rho_1,\rho_2}}(1)\right\}=\max\left\{ \underline{F_{2,\rho_1,\rho_2}}(5/9),0\right\}.$$ Note that
\begin{align*}
\underline{F_{2,\rho_1,\rho_2}}(5/9)
&= \dfrac{5e^{m_2-2\rho_2-\frac{1}{2}}}{81\pi} \left(\dfrac{5 \sin(\pi)}{\pi} + \frac{20}{9}\right) - \dfrac{5e^{M_2+2\rho_2+\frac{1}{2}}}{81\pi}\left(\dfrac{5 \sin(b)}{\pi} + 4\right)\frac{5}{9}\\
&= \dfrac{100e^{m_2-2\rho_2-\frac{1}{2}}}{729\pi} - \dfrac{25e^{M_2+2\rho_2+\frac{1}{2}}}{729\pi}\left(\dfrac{5 \sin(b)}{\pi} + 4\right)\\
&\approx 0.0436 e^{m_2-2\rho_2-\frac{1}{2}} - 0.0334 e^{M_2+2\rho_2+\frac{1}{2}} < 0
\end{align*}
for every $\rho_2>0$. Therefore we obtain $\max_{[5/9,1]}\underline{F_{2,\rho_1,\rho_2}}=0$.

To summarize, we have
\[
\max_{[0,1]}\underline{F_{2,\rho_1,\rho_2}} = \max_{[0,5/9]}\underline{F_{2,\rho_1,\rho_2}} \geq 0,
\]
and this maximum is positive for every $\rho_2<\alpha\approx0.09609$. Thus, in this case, conditions $\ref{b}$ and $\ref{d}$ hold. In other words, we may apply Theorem~\ref{ThmG} if $(\rho_1,\rho_2) \in (0,+\infty)\times(0,\alpha)$ and in such cases there exist
\begin{gather*}
    (\lambda_{1,1},\lambda_{2,1}) \in (0,+\infty) \times (0,+\infty),\quad  (\lambda_{1,2},\lambda_{2,2}) \in (0,+\infty)\times(-\infty,0),\\
    (u_{1,1},u_{2,1})=(\phi_1+v_{1,1},\phi_2+v_{2,1}) \in \overline{K^{\phi_1}_{\rho_1}} \times \overline{B^{\phi_2}_{\rho_2}}, \text{ with } \| v_{1,1}\|_1=\rho_1, \| v_{2,1}\|_2=\rho_2,\\
    (u_{1,2},u_{2,2})=(\phi_1+v_{1,2},\phi_2+v_{2,2}) \in \overline{K^{\phi_1}_{\rho_1}} \times \overline{B^{\phi_2}_{\rho_2}}, \text{ with } \| v_{1,2}\|_1=\rho_1, \| v_{2,2}\|_2=\rho_2
\end{gather*}
such that, for $j=1,2$,
\[
\begin{cases}
    u_{1,j}(t) = \phi_1(t) + \lambda_{1,j} \int_0^1 k_1(t,s) f_1(s,u_{1,j}(s),u_{2,j}(s),H_1[u_{1,j},u_{2,j}](s)) \,ds,&t \in \left[ -\frac{\pi}{8}, 1+\frac{\pi}{8} \right], \\
    u_{2,j}(t) = \phi_2(t) + \lambda_{2,j} \int_0^1 k_2(t,s) f_2(s,u_{1,j}(s),u_{2,j}(s),H_2[u_{1,j},u_{2,j}](s)) \,ds, &t \in \left[ -\frac{\pi}{6}, 1+\frac{\pi}{16} \right]. \\
\end{cases}
\]
In Figure~\ref{fig:sol approx} we show a numerical approximation of these two couples of solutions.
\begin{figure}[h]
    \centering
    \includegraphics[width=0.7\linewidth]{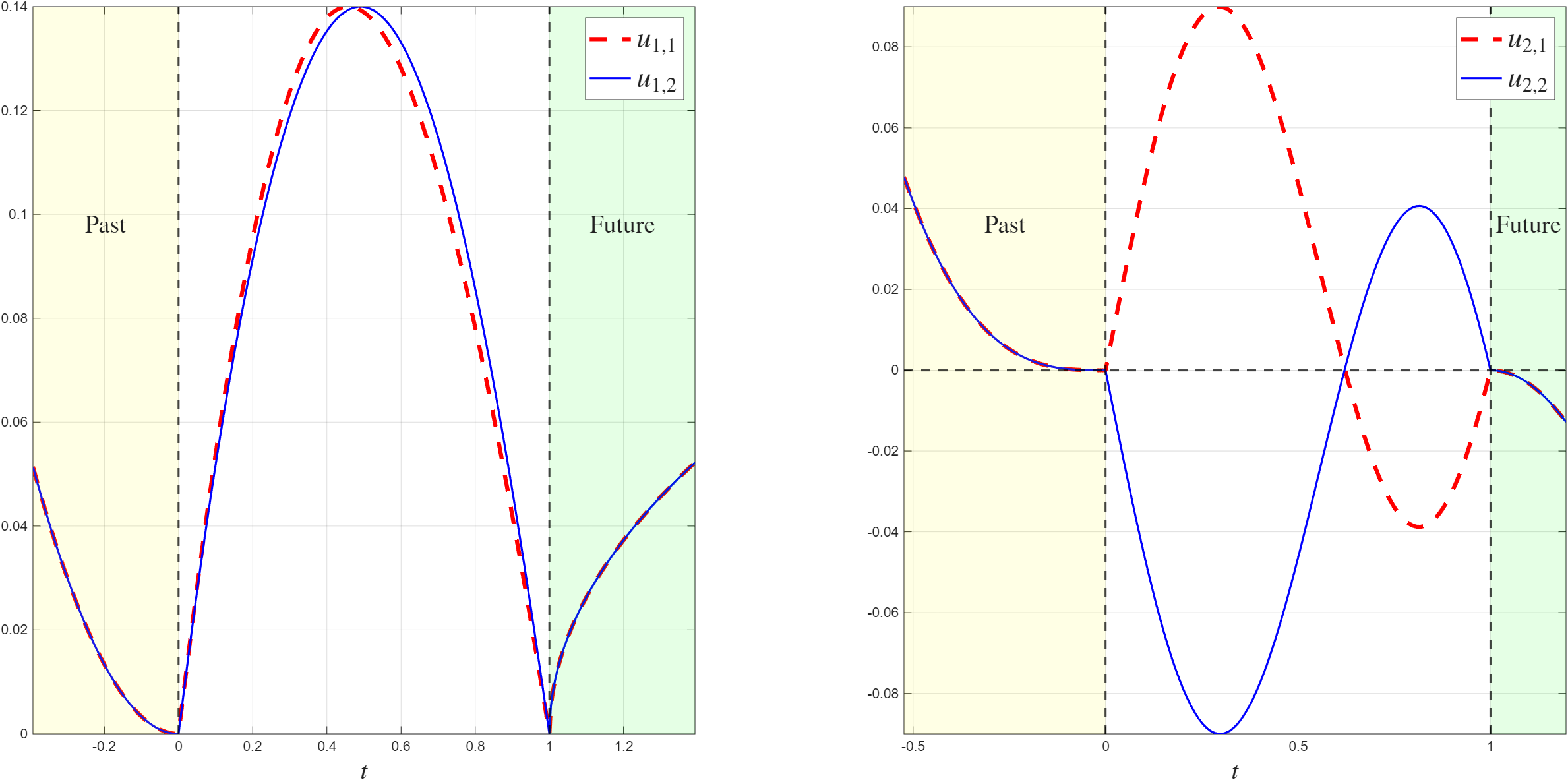}
    \caption{Approximation of the solutions $u_{i,j}$ for $\rho_1=0.14$ and $\rho_2=0.09$.}
    \label{fig:sol approx}
\end{figure}
Moreover, since we have conditions \ref{b} and \ref{d}, we also have, for $j=1,2$, the estimates
\begin{equation}\label{estimates1}
    \underline{\Gamma_{\rho_1}}:=\frac{\rho_1}{\max\{\|\underline{F_{1,\rho_1,\rho_2}}\|_1,\|\overline{F_{1,\rho_1,\rho_2}}\|_1\}} \leq |\lambda_{1,j}| \leq \frac{\rho_1}{\max_{\left[ -\frac{\pi}{8}, 1+\frac{\pi}{8} \right]}\underline{F_{1,\rho_1,\rho_2}}} =: \overline{\Gamma_{\rho_1}},
\end{equation}
and
\begin{equation}\label{estimates2}
    \underline{\Gamma_{\rho_2}}:=\frac{\rho_2}{\max\{\|\underline{F_{2,\rho_1,\rho_2}}\|_2,\|\overline{F_{2,\rho_1,\rho_2}}\|_2\}} \leq |\lambda_{2,j}| \leq \frac{\rho_2}{\max_{\left[ -\frac{\pi}{6}, 1+\frac{\pi}{16} \right]}\underline{F_{2,\rho_1,\rho_2}}} =: \overline{\Gamma_{\rho_2}},
\end{equation}
for every $\rho_1>0$ and $\rho_2 \in (0,\alpha)$. 

By using MATLAB, in Figures~\ref{fig:eigenapprox++} and \ref{fig:eigenapprox+-}, we illustrate the inequalities~\eqref{estimates1} and \eqref{estimates2} and provide a numerical approximation of $\lambda_{i,j}$ which is consistent with the theoretical estimates.
\begin{figure}[h]
    \centering
    \includegraphics[width=0.7\linewidth]{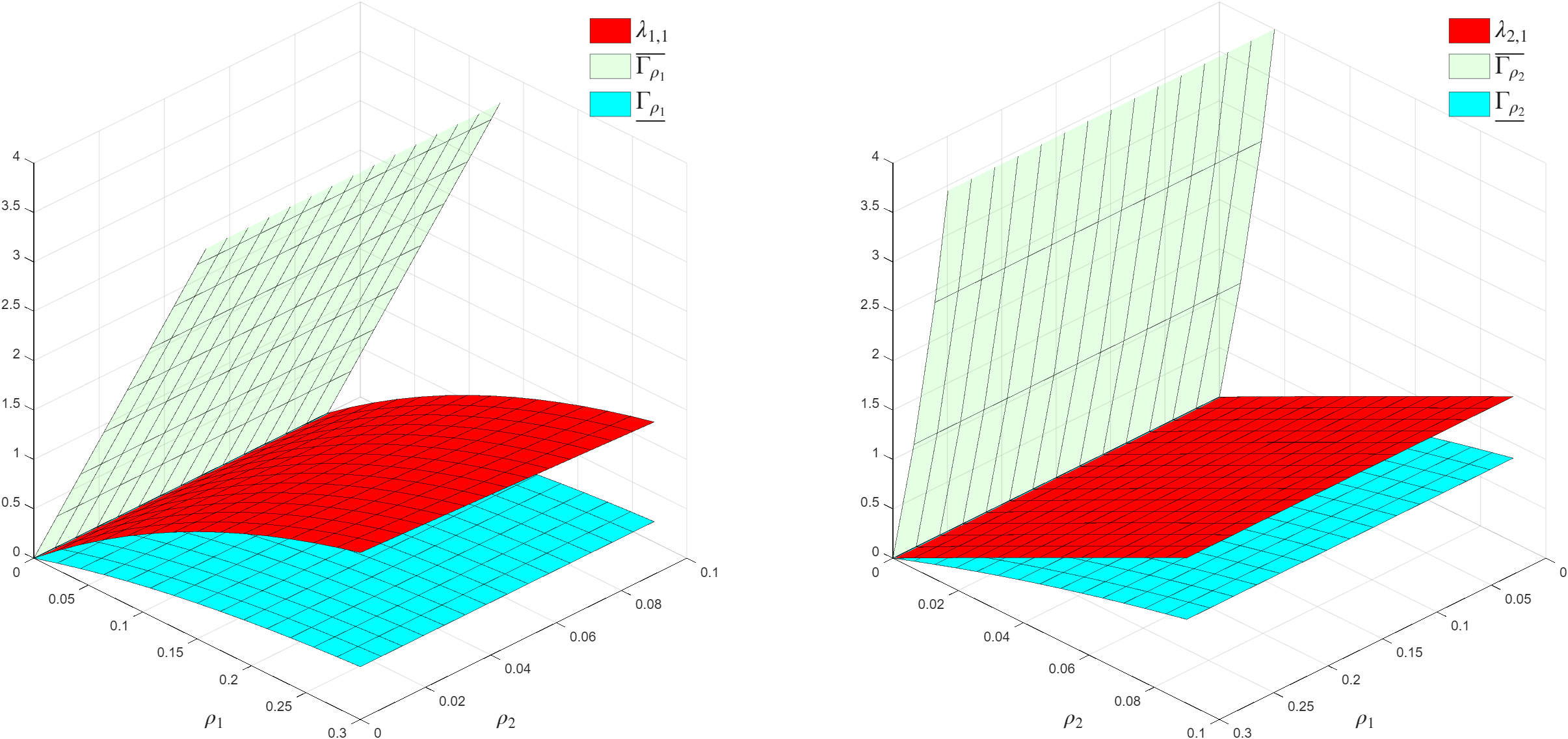}
    \caption{Location and approximation of $\lambda_{1,1}(\rho_1,\rho_2)$ and $\lambda_{2,1}(\rho_1,\rho_2)$.}
    \label{fig:eigenapprox++}
\end{figure}

\begin{figure}[h]
    \centering
    \includegraphics[width=0.7\linewidth]{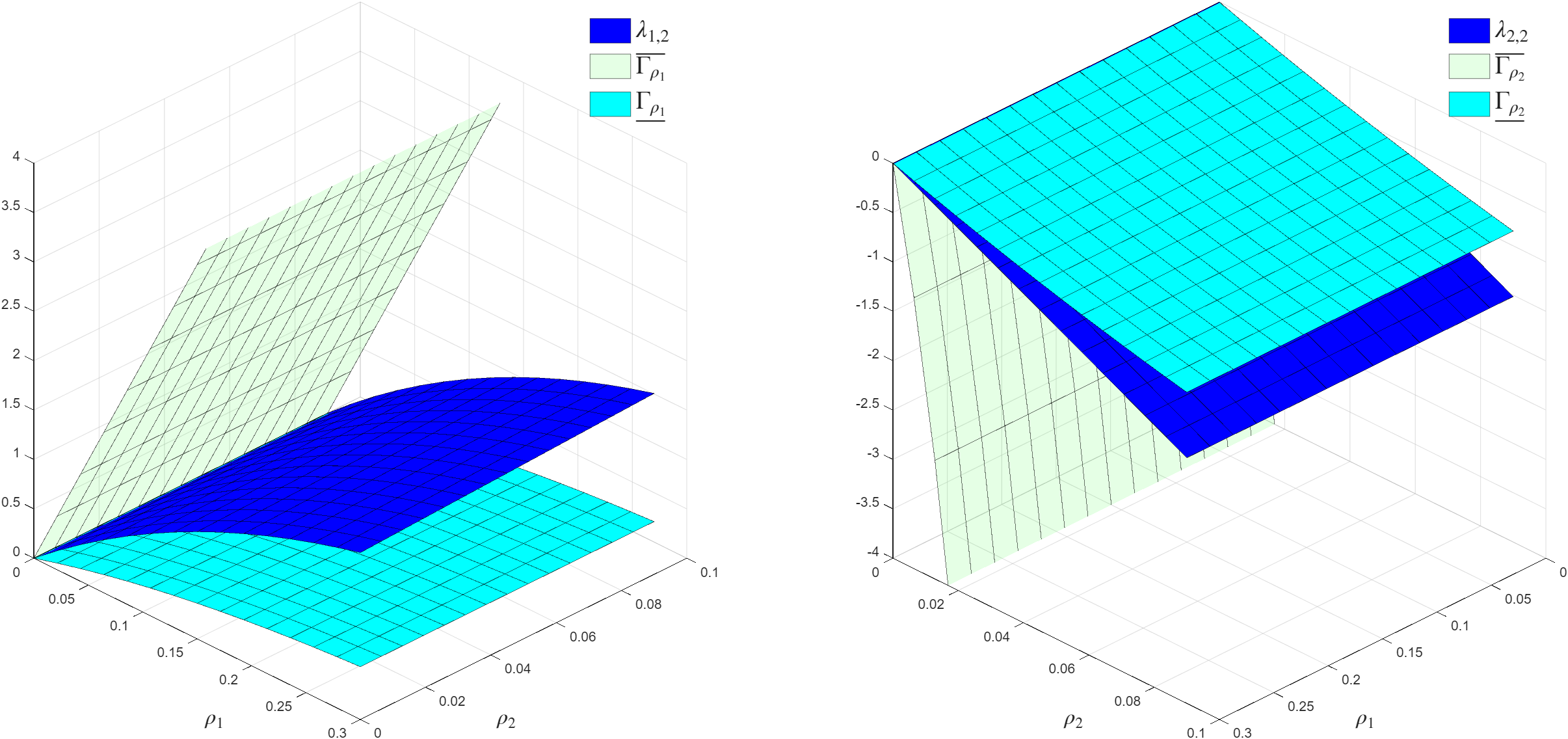}
    \caption{Location and approximation of $\lambda_{1,2}(\rho_1,\rho_2)$ and $\lambda_{2,2}(\rho_1,\rho_2)$.}
    \label{fig:eigenapprox+-}
\end{figure}

\end{ex}

\section*{Acknowledgements}
The authors are members of the Gruppo Nazionale per l'Analisi Matematica, la Probabilit\`a e le loro Applicazioni (GNAMPA) of the Istituto Nazionale di Alta Matematica (INdAM).
G.~Infante and G. A. Veltri are members of the UMI Group TAA. G.~Infante is a member of the ``The Research ITalian network on Approximation (RITA)''.

\section*{Funding}
A.~Calamai and G.~Infante have been partially supported by the GNAMPA. G.~Infante acknowledges the Italian Ministry of University and Research's grant [DD 170 24.09.2025 - DM MUR 737\_2021] supporting the University of Calabria for financing research in ``Aree Disciplinari Sociali e Umanistiche'', Department of Economics, Statistics and Finance [DD 248/2025 17.12.2025]. Project title: ``Actuarial and decision-making models to support the planning and development of healthcare welfare in Southern Italy and Calabria''.

\section*{Conflicts of interest}
The authors declare no conflict of interest.

\section*{Contribution statement}
All authors contributed equally to this manuscript.


\begin{thebibliography}{99}

\bibitem{ADV} J. Appell, E. De Pascale and A. Vignoli, \textit{Nonlinear spectral
theory}, Walter de Gruyter \& Co., Berlin,~2004.

\bibitem{Avramescu}	C. Avramescu, On a fixed point theorem, {\it St. Cerc. Mat.}, {\bf22(2)}, (1970) 215--221 (in Romanian). 

\bibitem{Benedetti}	
I. Benedetti, T. Cardinali and R. Precup, 
Fixed point-critical point hybrid theorems and application to systems with partial variational structure, {\it J. Fixed Point Theory Appl.}, {\bf 23} (2021), Paper No. 63, 19~pp.	

\bibitem{B-K-1922}
G. D. Birkhoff and O. D. Kellogg, Invariant points in function space, \textit{Trans. Amer. Math. Soc.}, \textbf{23} (1922), 96--115.

\bibitem{acgi2} A. Calamai and G. Infante, 
An affine Birkhoff--Kellogg type result in cones with applications to functional differential equations,
\textit{Math.\ Meth.\ Appl.\ Sci.}, \textbf{46} (2023), no.~11, 11897--11905.

\bibitem{acgijrl} A. Calamai, G. Infante, and J. Rodr\'iguez-L\'opez, 
Birkhoff--Kellogg type results in product spaces and their application to differential systems,
\textit{J. Fixed Point Theory Appl.}, \textbf{28} (2026), 37.

\bibitem{djeb2014}
S. Djebali and K. Mebarki, Fixed point index on translates of cones and applications, \textit{Nonlinear Stud.}, \textbf{21} (2014), 579--589. 

\bibitem{rub-rod-lms} R. Figueroa and R. L. Pouso, Minimal and maximal solutions to second-order boundary value problems with state-dependent deviating arguments, \textit{Bull. Lond. Math. Soc.}, \textbf{43} (2011), 164--174.

\bibitem{GrimmSchmitt}
L. J. Grimm and K. Schmitt, Boundary value problems for differential equations with deviating arguments, \textit{Aequationes Math.}, \textbf{4} (1970), 176--190.


\bibitem{guolak} D. Guo and V. Lakshmikantham,
\textit{Nonlinear problems in abstract cones}, Academic Press, Boston~1988. 




\bibitem{halelunel}
J. K. Hale and S. M. V. Lunel,
\textit{Introduction to Functional Differential Equations},
Springer Verlag, New York, 1993.

\bibitem{hartung}
F. Hartung, T. Krisztin, H.-O. Walther and J. Wu, Functional differential equations with state-dependent delays: theory and applications. \textit{Handbook of differential equations: ordinary differential equations Vol. III}, 435--545, Handb. Differ. Equ., Elsevier, Amsterdam, 2006.

\bibitem{hemedr}
H. R. Henr\'iquez, J. G. Mesquita and H. C. dos Reis, Existence results for abstract functional differential equations with infinite state-dependent delay and applications, \textit{Math. Ann.}, \textbf{388} (2024), 1817--1840.

\bibitem{gi-BK}
G. Infante, Eigenvalues of elliptic functional differential systems via a Birkhoff--Kellogg type theorem, \textit{Mathematics}, \textbf{9} (2021), n.~4.

\bibitem{imap} G. Infante, M. Maciejewski and R. Precup, A topological approach to the existence
and multiplicity of positive solutions of $(p, q)$-Laplacian systems, \textit{{Dyn. Partial Differ. Equ.}}, \textbf{12}  (2015), 193--215.

\bibitem{InMaRo} G. Infante, G. Mascali and J. Rodr\'iguez-L\'opez, A hybrid Krasnosel'ski\u{\i}-Schauder fixed point theorem for systems,
\textit{Nonlinear Anal. Real World Appl.}, \textbf{80} (2024), 1--9.

\bibitem{Krasno}
M. A. Krasnosel'ski\u{i},
\textit{Positive solutions of operator equations}, Noordhoff, Groningen, 1964.



\bibitem{NtouyasTsamatos1}
S. K. Ntouyas and  P. Ch. Tsamatos, On well-posedness of boundary value problems involving deviating arguments, \textit{Funkc. Ekvacioj, Ser. Int.}, \textbf{35} (1992), 137--147.

\bibitem{NtouyasTsamatos2}
S. K. Ntouyas and  P. Ch. Tsamatos, 
Existence and uniqueness for second order boundary value problems, \textit{Funkc. Ekvacioj, Ser. Int.}, \textbf{38} (1995), 59--69.


\bibitem{Perov} A. I. Perov and A. V. Kibenko, On a certain general method for investigation of boundary value problems, \textit{Izv. Akad. Nauk SSSR}, \textbf{30} (1966), 249--264 (in Russian).
 
\bibitem{PrecupFPT} R. Precup, A vector version of Krasnosel'ski\u{\i}'s fixed point theorem in cones and positive periodic solutions of nonlinear systems, {\it J. Fixed Point Theory Appl.}, {\bf 2} (2007), 141--151.



\end{thebibliography}
\end{document}